\documentclass[10pt,twoside]{article}
\usepackage[utf8]{inputenc}

\title{\uppercase{\textbf{\large{On the motivic cohomology of some singular rings}}}}

\author{\textsc{tess bouis}}
\date{}
\usepackage[left=3cm, right=3cm, top=1in, bottom=1in, headheight=2cm]{geometry}

\usepackage{fancyhdr}
\usepackage{stmaryrd}

\usepackage{mathdots}

\usepackage{amsmath, amscd, amsfonts, amssymb, amsthm,todonotes,eurosym,enumitem,relsize}

\usepackage[utf8]{inputenc}
\usepackage{dirtytalk}
\usepackage[T1]{fontenc}
\usepackage{mathrsfs}

\usepackage[colorlinks]{hyperref}
\usepackage[figure,table]{hypcap}
\definecolor{imperialred}{RGB}{237, 41, 57}
\definecolor{royalblue}{RGB}{64, 106, 212}
\definecolor{link}{RGB}{11,0,128}
\definecolor{gren}{RGB}{32,130,63}
\hypersetup{
	bookmarksnumbered,
	pdfstartview={FitH},
	citecolor=royalblue,
	linkcolor=imperialred,
	urlcolor=link,
	linktocpage = true
	pdfpagemode={UseOutlines}
}

\usepackage{xcolor}
\usepackage{comment}

\usepackage[object=pgfhan]{pgfornament}

\usepackage{graphicx}
\usepackage{array}
\usepackage{csquotes}
\usepackage{extarrows}

\usepackage{needspace}

\input{xy}
\xyoption{all}
\usepackage{tikz-cd}
\usepackage{textcomp}
\usepackage{colonequals}

\usepackage{sectsty}
\sectionfont{\sc\centering}
\subsectionfont{\bf\large}
\subsubsectionfont{\bf\normalsize}

\usepackage{fancyhdr}

\newlength{\outermargin} 
\newlength{\mar} 
\newlength{\len}
\newlength{\temp}
\newtheorem{theorem}{Theorem}[section]
\newtheorem{lemma}[theorem]{Lemma}
\newtheorem{corollary}[theorem]{Corollary}
\newtheorem{proposition}[theorem]{Proposition}

\theoremstyle{definition}
\newtheorem{remark}[theorem]{Remark}

\newtheorem{example}[theorem]{Example}

\DeclareMathOperator{\Z}{\mathbb{Z}}
\DeclareMathOperator{\Q}{\mathbb{Q}}

\DeclareMathOperator{\F}{\mathbb{F}}

\DeclareMathOperator{\C}{\mathbb{C}}

\renewcommand{\epsilon}{\varepsilon}

\DeclareFontFamily{U}{MnSymbolC}{}
\DeclareFontShape{U}{MnSymbolC}{m}{n}{
	<-5.5> MnSymbolC5
	<5.5-6.5> MnSymbolC6
	<6.5-7.5> MnSymbolC7
	<7.5-8.5> MnSymbolC8
	<8.5-9.5> MnSymbolC9
	<9.5-11.5> MnSymbolC10
	<11.5-> MnSymbolCb12
}{}

\usepackage[bbgreekl]{mathbbol}
\DeclareSymbolFontAlphabet{\mathbb}{AMSb}
\DeclareSymbolFontAlphabet{\mathbbl}{bbold}

\usepackage{soul}
\usepackage{enumitem}
\numberwithin{equation}{theorem}

\begin{document}
	
	\maketitle
	
	\pagestyle{fancy}
	\fancyhead[EC]{TESS BOUIS}
	\fancyhead[OC]{ON THE MOTIVIC COHOMOLOGY OF SOME SINGULAR RINGS}
	\fancyfoot[C]{\thepage}
	
	\begin{abstract}
            Using non-$\mathbb{A}^1$-invariant motivic cohomology, we prove motivic refinements of certain known computations of the algebraic $K$-theory of singular rings, such as rings of the form $\Z/p^n$, $\Z[x]/(x^e)$, and $\mathscr{C}(X;\C)$ for $X$ a compact Hausdorff space. These refinements are made possible by the use of integral $p$-adic Hodge theory, as a replacement for the standard use of trace methods in $K$-theory.
	\end{abstract}

	{
		\hypersetup{linkcolor=black}
		\tableofcontents
	}

    \section{Introduction}

    \vspace{-\parindent}
    \hspace{\parindent}

    Motivic cohomology, as envisioned by Beilinson and Lichtenbaum \cite{lichtenbaum_values_1973,lichtenbaum_values_1984,beilinson_notes_1986,beilinson_height_1987,beilinson_notes_1987}, is a cohomological refinement of the algebraic $K$-theory of schemes. It was first developed, at the initiative of Bloch and Voevodsky \cite{bloch_algebraic_1986,voevodsky_cycles_2000}, in the generality of smooth schemes over a field or a Dedekind domain \cite{voevodsky_cycles_2000,levine_techniques_2001,geisser_motivic_2004,spitzweck_commutative_2018,bachmann_very_2025}. This approach relied crucially on $\mathbb{A}^1$\nobreakdash-invariant techniques, which are permitted by Quillen's theorem that algebraic $K$-theory is $\mathbb{A}^1$\nobreakdash-invariant on regular schemes \cite{quillen_higher_1973}. For not necessarily regular schemes, a more general definition of motivic cohomology, based on trace methods in algebraic $K$-theory, was recently introduced in the works of Elmanto--Morrow \cite{elmanto_motivic_2023} (for equicharacteristic schemes) and of the author \cite{bouis_motivic_2024,bouis_weibel_2025} (in general). This definition is non-$\mathbb{A}^1$-invariant in general, although it uses the classical $\mathbb{A}^1$-invariant motivic cohomology of smooth schemes as an input. Moreover, this new definition indeed generalises the classical definition in terms of motivic $\mathbb{A}^1$\nobreakdash-homotopy theory, as it recovers it for smooth schemes over a field \cite{elmanto_motivic_2023} or over a Dedekind domain~\cite{bouis_beilinson-lichtenbaum_2025}. Note that an alternative approach to non-$\mathbb{A}^1$-invariant motivic cohomology was also recently introduced by Kelly--Saito in terms of the pro cdh topology \cite{kelly_procdh_2024}; the fact that these two definitions agree is \cite[Corollary~C]{bouis_weibel_2025}.

    The aim of this article is to prove that this new, non-$\mathbb{A}^1$-invariant theory of motivic cohomology can actually be computed, or at least concretely manipulated, in several classes of interesting examples. We focus on five classes of commutative rings for which motivic cohomology was previously not defined:
    \begin{enumerate}
        \item finite chain rings, {\it e.g.}, $\Z/p^n$ (see Section~\ref{Sectionfinitechain});
        \item truncated polynomials, {\it e.g.}, $\Z[x]/(x^e)$ (see Section~\ref{Sectiontruncated});
        \item perfect and semiperfect rings, {\it e.g.}, $\F_p[x^{1/p^{\infty}},y^{1/p^{\infty}}]/(x-y)$ (see Section~\ref{Sectionperfect});
        \item henselian valuation rings, {\it e.g.}, $\mathcal{O}_{\C_p}$ (see Section~\ref{Sectionvaluation});
        \item $\C^\star$-algebras, {\it e.g.}, the algebra of continuous $\C$-valued functions on a compact Hausdorff space (see Section~\ref{SectionCstar}).
    \end{enumerate}
    
    These five classes of examples are treated independently. The algebraic $K$-theory of these commutative rings was already studied by several authors and for different purposes. We often use these existing results to establish, via the Adams decomposition of rational algebraic $K$-theory, the rational part of our results (except in Section~\ref{Sectiontruncated}, where we use de Rham cohomology to reprove the known $K$-theoretic results). Establishing our results integrally is often more subtle, and relies crucially on the approach to motivic cohomology taken in \cite{elmanto_motivic_2023,bouis_motivic_2024,bouis_weibel_2025}, {\it i.e.}, on the use of trace methods and, in mixed characteristic, on the integral $p$-adic Hodge theory of Bhatt--Morrow--Scholze \cite{bhatt_topological_2019} and Bhatt--Scholze \cite{bhatt_prisms_2022}. 
    
    Note finally that while some of our results are mere motivic refinements of known results in algebraic $K$-theory, passing to the motivic level can also yield more understanding of the existing $K$-theoretic results (see for instance Theorems~\ref{theoremtruncatedpolynomialmain} and~\ref{theoremvaluationringsmotiviccohomologyfinitecoefficients}, Corollary~\ref{corollarytruncatedpolynomialintegral}, and Remarks~\ref{remarknilpotenceofv1} and \ref{remarknegativeKgroupsperfectalgebras}).

    \subsection*{Acknowledgements.}

    I would like to thank Elden Elmanto and Matthew Morrow for helpful discussions, Matthew Morrow for comments on a draft of this paper, and the referee for helpful comments and corrections. This project has received funding from the European Research Council (ERC) under the European Union’s Horizon 2020 research and innovation programme (grant agreement No. 101001474).

    \vspace{-\parindent}
    \hspace{\parindent}

	\section{Finite chain rings}\label{Sectionfinitechain}
	
	\vspace{-\parindent}
	\hspace{\parindent}
	
	Finite chain rings are commutative rings $\mathcal{O}_K/\pi^n$, where $\mathcal{O}_K$ is a mixed characteristic discrete valuation ring with finite residue field, $\pi$ is a uniformizer of $\mathcal{O}_K$, and $n \geq 1$ is an integer. Examples of finite chain rings thus include finite fields, rings of the form $\Z/p^n$, and truncated polynomials over a finite field.
	
	\begin{lemma}\label{lemmafinitechainringsdecomposition}
		Let $\mathcal{O}_K$ be a discrete valuation ring of mixed characteristic $(0,p)$ and with finite residue field $\F_q$, $\pi$ be a uniformizer of $\mathcal{O}_K$, and $n \geq 1$ be an integer. Then for every integer $i \geq 0$, there is a natural equivalence
		$$\Z(i)^{\emph{mot}}(\mathcal{O}_K/\pi^n) \simeq \left\{
        \begin{array}{ll}
			\Z[0] & \emph{if } i=0\\
			\Z_p(i)^{\emph{syn}}(\mathcal{O}_K/\pi^n) \oplus \Z(i)^{\emph{mot}}(\F_q)[\tfrac{1}{p}] & \emph{if } i \geq 1
		\end{array}
		\right.$$
		in the derived category $\mathcal{D}(\Z)$.
	\end{lemma}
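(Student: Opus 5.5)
We would prove the statement by combining the pullback square defining $\Z(i)^{\mathrm{mot}}$ in \cite{bouis_motivic_2024} with the arithmetic fracture square. Write $A=\mathcal{O}_K/\pi^n$; it is a finite ring of characteristic $p^m$ for some $m$, so $A$ is $p$-henselian, and $A\otimes\Q=0$.

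The case $i=0$ is handled separately. By construction, $\Z(0)^{\mathrm{mot}}$ is the cdh sheaf $R\Gamma_{\mathrm{cdh}}(-,\Z)$, or equivalently $\Z(0)^{\mathrm{cdh}}$. Since $A_{\mathrm{red}}=\F_q$ is a field and cdh cohomology is nil-invariant, this gives $\Z(0)^{\mathrm{mot}}(A)\simeq\Z[0]$.

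For $i\geq 1$, we split $\Z(i)^{\mathrm{mot}}(A)$ by the fracture square. Its $p$-completion is $\Z_p(i)^{\mathrm{syn}}(A)$ by the comparison theorem of \cite{bouis_motivic_2024}, which identifies $p$-adic motivic cohomology with syntomic cohomology on $p$-henselian (in particular $p$-nilpotent) rings. It remains to understand $\Z(i)^{\mathrm{mot}}(A)[\tfrac1p]$.
\begin{itemize}
\item The defining cartesian square of \cite{bouis_motivic_2024} expresses $\Z(i)^{\mathrm{mot}}$ as $\Z(i)^{\mathrm{cdh}}$ corrected by the fibre of the map from $\Z(i)^{\mathrm{TC}}$ to its cdh-sheafification.
\item After inverting $p$, the $\ell$-adic parts of these trace-type terms vanish for $\ell\neq p$. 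The rational part vanishes because $A$ is a $\Z/p^m$-algebra, where both the rational $\mathrm{HC}^-$ terms and the $p$-adic terms die after inverting $p$.
\item Hence $\Z(i)^{\mathrm{mot}}(A)[\tfrac1p]\simeq\Z(i)^{\mathrm{cdh}}(A)[\tfrac1p]\simeq\Z(i)^{\mathrm{cdh}}(\F_q)[\tfrac1p]$, using nil-invariance of cdh motivic cohomology. Since $\F_q$ is smooth, this equals $\Z(i)^{\mathrm{mot}}(\F_q)[\tfrac1p]$.
\item Alternatively, one can use rigidity: the fibre of $\Z(i)^{\mathrm{mot}}(A)\to\Z(i)^{\mathrm{mot}}(\F_q)$ is $p$-power torsion, being $p$-adically controlled by syntomic cohomology for the nilpotent extension $A\to\F_q$.
\end{itemize}

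To split the fracture square, we check that the $p$-completion of $\Z(i)^{\mathrm{mot}}(\F_q)[\tfrac1p]$ is zero and that $\Z_p(i)^{\mathrm{syn}}(A)[\tfrac1p]=0$. The first holds because $\Z(i)^{\mathrm{mot}}(\F_q)[\tfrac1p]$ is uniquely $p$-divisible. The second holds because syntomic cohomology of a $\Z/p^m$-algebra is killed by a power of $p$, as $\Z_p(i)^{\mathrm{syn}}$ is a module over $\Z_p(0)^{\mathrm{syn}}(A)$, which receives $\Z/p^m$. The fracture square then degenerates to a pullback with zero corner, giving
$$\Z(i)^{\mathrm{mot}}(A)\simeq \Z_p(i)^{\mathrm{syn}}(A)\oplus \Z(i)^{\mathrm{mot}}(A)[\tfrac1p].$$
The two ingredients used here are that $\Z(i)^{\mathrm{mot}}(A)$ is the fibre of the map to its $p$-completion after inverting $p$, and that a fibre sequence $X\to Y\oplus Z\to W$ with $W=0$ gives $X\simeq Y\oplus Z$. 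Combined with the previous step, this yields the claimed equivalence. Naturality follows because every identification above comes from natural maps.

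The main obstacle is making the identification $\Z(i)^{\mathrm{mot}}(A)[\tfrac1p]\simeq\Z(i)^{\mathrm{mot}}(\F_q)[\tfrac1p]$ rigorous from the actual definition of the theory. I would first try to use the statement in \cite{bouis_motivic_2024} that the fibre of $\Z(i)^{\mathrm{mot}}\to\Z(i)^{\mathrm{cdh}}$ agrees with that of the map from $\Z(i)^{\mathrm{TC}}$ to its cdh version. Rationally, this fibre is described by derived de Rham/cyclic homology over $\Q$, which vanishes here because $A\otimes\Q=0$. Away from $p$, it vanishes by the $\ell$-adic rigidity built into the construction.
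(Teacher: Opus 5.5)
Your treatment of $i=0$ matches the paper. For $i\geq1$, the fracture-square strategy is a legitimate alternative in outline, but the two steps carrying it rest on general claims that are false.

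\textbf{The comparison with syntomic cohomology.} There is no theorem identifying $\Z_p(i)^{\mathrm{mot}}$ with $\Z_p(i)^{\mathrm{syn}}$ on $p$-henselian rings. The comparison of \cite[Theorem~5.10]{bouis_motivic_2024} (cf.\ \cite[Corollary~4.40]{elmanto_motivic_2023}) has a cdh-local correction term. On cdh-local rings it only yields $\tau^{\leq i}$ of syntomic cohomology, which is how Proposition~\ref{propositionmotiviccohomologyvaluationringsmodpisindegreesatmosti} is proved.

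\textbf{The torsion claim.} Syntomic cohomology of a $\Z/p^m$-algebra is not in general killed by a power of $p$, and $\Z_p(0)^{\mathrm{syn}}(A)$ receives no ring map from $\Z/p^m$. Already $\Z_p(0)^{\mathrm{syn}}(\F_p)\simeq\Z_p\oplus\Z_p[-1]$.

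Both claims also fail in positive weight. For the $\F_p$-algebra $\F_p(t)$, $\mathrm{H}^1$ of $\Z_p(1)^{\mathrm{syn}}$ is $(\F_p(t)^\times)^\wedge_p$, which is not torsion. Moreover, $\mathrm{H}^2$ of $\Z/p(1)^{\mathrm{syn}}$ is $\mathrm{Br}(\F_p(t))[p]\neq0$, whereas weight-one mod $p$ motivic cohomology of a field lives in degrees $\leq1$.

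These gaps are not cosmetic. Splitting your fracture square requires $\Z_p(i)^{\mathrm{syn}}(A)[\tfrac1p]=0$. Your identification of $\Z(i)^{\mathrm{mot}}(A)[\tfrac1p]$ also relies on the trace terms ``dying after inverting $p$''.

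\textbf{The missing ingredient.} What makes your conclusions true for $A=\mathcal{O}_K/\pi^n$ is the vanishing $\Z_p(i)^{\mathrm{syn}}(\F_q)=0$ for $i\geq1$, together with rigidity along the nilpotent extension $A\to\F_q$. The vanishing holds because logarithmic de Rham--Witt forms of a perfect field vanish in positive degree. This ingredient never appears in your argument. With it, the correction term above is a cdh sheaf, hence nil-invariant, hence equal to its value on $\F_q$, which is zero. The rational vanishing of $\Z_p(i)^{\mathrm{syn}}(A)$ would still need a separate argument that relative syntomic cohomology of $A\to\F_q$ is $p$-power torsion.

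\textbf{The paper's route.} The paper avoids the fracture square and this extra step altogether. By \cite[Proposition~3.29]{bouis_motivic_2024}, the square with rows $\Z(i)^{\mathrm{mot}}(A)\to\Z_p(i)^{\mathrm{syn}}(A)$ and $\Z(i)^{\mathrm{mot}}(\F_q)\to\Z_p(i)^{\mathrm{syn}}(\F_q)$ is cartesian. The zero corner then gives $\Z(i)^{\mathrm{mot}}(A)\simeq\Z_p(i)^{\mathrm{syn}}(A)\oplus\Z(i)^{\mathrm{mot}}(\F_q)$ immediately. One finishes with $\Z(i)^{\mathrm{mot}}(\F_q)\simeq\Z(i)^{\mathrm{mot}}(\F_q)[\tfrac1p]$ for $i\geq1$.

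Your ``alternatively, rigidity'' bullet points at exactly this square. Used to split directly, rather than only to show a fibre is $p$-power torsion, it would have given the lemma without needing $\Z_p(i)^{\mathrm{syn}}(A)[\tfrac1p]=0$.
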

	
	\begin{proof}
		The result for $i=0$ follows from the equivalences
		$$\Z(0)^{\text{mot}}(\mathcal{O}_K/\pi^n) \simeq R\Gamma_{\text{cdh}}(\mathcal{O}_K/\pi^n,\Z) \simeq R\Gamma_{\text{cdh}}(\F_q,\Z) \simeq \Z[0]$$
		in the derived category $\mathcal{D}(\Z)$, the first equivalence being \cite[Example~$4.68$]{bouis_motivic_2024}, the second equivalence being nilpotent invariance of cdh sheaves, and the last equivalence being a consequence of the fact that fields are local for the cdh topology.
		
		For every integer $i \geq 0$, the commutative diagram
		$$\begin{tikzcd}
			\Z(i)^{\text{mot}}(\mathcal{O}_K/\pi^n) \ar[r] \ar[d] & \Z_p(i)^{\text{syn}}(\mathcal{O}_K/\pi^n) \ar[d] \\
			\Z(i)^{\text{mot}}(\F_q) \ar[r] & \Z_p(i)^{\text{syn}}(\F_q)
		\end{tikzcd}$$
		is a cartesian square in the derived category $\mathcal{D}(\Z)$ \cite[Proposition~$3.29$]{bouis_motivic_2024}. If $i \geq 1$, the bottom right term vanishes (use for instance the description of Bhatt--Morrow--Scholze's syntomic cohomology in characteristic $p$ in terms of logarithmic de Rham--Witt forms), and there is a natural equivalence
		$$\Z(i)^{\text{mot}}(\F_q) \xlongrightarrow{\sim} \Z(i)^{\text{mot}}(\F_q)[\tfrac{1}{p}]$$
		in the derived category $\mathcal{D}(\Z)$ (by a classical result in motivic cohomology, see also Theorem~\ref{theoremmotiviccohomologyofperfectschemes}\,$(2)$ for a more general statement), hence the desired result.
	\end{proof}
	
	\begin{proposition}
		Let $\mathcal{O}_K$ be a mixed characteristic discrete valuation ring with finite residue field, $\pi$ be a uniformizer of~$\mathcal{O}_K$, and $n \geq 1$ be an integer. Then for every integer $m \in \Z$, there is a natural isomorphism
		$$\emph{K}_m(\mathcal{O}_K/\pi^n) \cong \left\{
		\begin{array}{llll}
			\Z & \emph{if } m=0\\
			\emph{H}^1_{\emph{mot}}(\mathcal{O}_K/\pi^n,\Z(i)) & \emph{if } m=2i-1, \emph{ } i \geq 1\\
			\emph{H}^2_{\emph{mot}}(\mathcal{O}_K/\pi^n,\Z(i)) & \emph{if } m=2i-2, \emph{ } i \geq 2\\
			0 & \emph{if } m<0
		\end{array}
		\right.$$
		of abelian groups.
	\end{proposition}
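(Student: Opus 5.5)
The plan is to use the Atiyah--Hirzebruch type spectral sequence
$$E_2^{i-j,-i-j} = \mathrm{H}^{i-j}_{\mathrm{mot}}(\mathcal{O}_K/\pi^n,\Z(-j)) \Longrightarrow \mathrm{K}_{-i-j}(\mathcal{O}_K/\pi^n)$$
of \cite{bouis_motivic_2024}, and to show that it collapses for degree reasons. We therefore begin by computing in which cohomological degrees the complexes $\Z(i)^{\mathrm{mot}}(\mathcal{O}_K/\pi^n)$ live, using Lemma~\ref{lemmafinitechainringsdecomposition}.

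For $i=0$ the lemma gives $\Z[0]$, which contributes only $\Z$ to $\mathrm{K}_0$. For $i\geq1$ there are two summands to control.
\begin{enumerate}
\item The summand $\Z(i)^{\mathrm{mot}}(\F_q)[\tfrac1p]$. By Quillen's computation (equivalently, the Beilinson--Lichtenbaum description of motivic cohomology of finite fields), it is concentrated in degree $1$, where it equals $\Z/(q^i-1)$. This group is already prime to $p$.
\item The summand $\Z_p(i)^{\mathrm{syn}}(\mathcal{O}_K/\pi^n)$. Here I need that it is concentrated in degrees $1$ and $2$. It lives in degrees $\le i+1$, and its $\mathrm{H}^0$ vanishes for $i\geq1$. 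For a quasisyntomic $\F_p$- or $\Z_p$-algebra of this kind (a local complete intersection, finite over $\Z_p$), the sharper bound of degrees $\leq 2$ should come from two facts. First, syntomic cohomology of a local ring is computed via the fibre of $\mathrm{can}-\phi_i$ on Nygaard-filtered prismatic cohomology. Second, $\mathcal{O}_K/\pi^n$ is an lci quotient of a DVR: write it as $\mathcal{O}_K/(\pi^n)$ and use the prismatic/Breuil--Kisin description, in which relative prismatic cohomology over the Breuil--Kisin prism is concentrated in degree $0$ for a quotient by a single nonzerodivisor. Then $\mathcal{N}^{\geq i}\Delta$ and $\Delta$ are both discrete, or the Frobenius fibre sits in degrees $[0,1]$. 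Adding the one degree of Breuil--Kisin descent (the relative-to-absolute comparison) gives degrees $[0,2]$, and $\mathrm{H}^0=0$.
\end{enumerate}
Since $\mathcal{O}_K/\pi^n$ is henselian local, the motivic cohomology in degree $0$ vanishes for weight $\geq1$ anyway (the lemma already shows it is torsion, with no $\mathrm{H}^0$ contribution). Hence for $i\ge1$, $\Z(i)^{\mathrm{mot}}(\mathcal{O}_K/\pi^n)$ has cohomology only in degrees $1,2$, and $\mathrm{H}^2$ vanishes when $i=1$. For $i=1$ this is because $\Z(1)^{\mathrm{mot}}\simeq R\Gamma(-,\mathbb{G}_m)[-1]$ and the Picard group of a local ring is trivial.

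Given this concentration, the weight-$i$ contributions land in $\mathrm{K}_{2i-1}$ (from $\mathrm{H}^1$) and $\mathrm{K}_{2i-2}$ (from $\mathrm{H}^2$). Each $\mathrm{K}_m$ with $m\ge1$ then receives exactly one nonzero $E_2$ term, and $\mathrm{K}_0$ receives only $\Z$, since the $\mathrm{H}^2$ of weight $1$ vanishes. All differentials change weight while shifting the cohomological degree by a fixed amount, so they must go between degrees $1$ and $2$ within the band. A differential $d_r$ goes from $\mathrm{H}^{s}(\Z(i))$ to $\mathrm{H}^{s+r}(\Z(i+r-1))$, and with $s\in\{1,2\}$ and $s+r\in\{1,2\}$ we get $r\le1$; so none exist for $r\geq2$. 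The spectral sequence therefore degenerates, and there are no extension problems because each total degree has a single term. Negative K-groups vanish because no terms lie in negative total degree. Alternatively, $\mathrm{K}_{<0}$ vanishes for a noetherian ring of dimension $0$ by Weibel's conjecture/Bass. Convergence holds because each $\Z(i)^{\mathrm{mot}}$ is bounded in terms of $i$.

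The main obstacle is step 2, the cohomological amplitude $[1,2]$ of $\Z_p(i)^{\mathrm{syn}}(\mathcal{O}_K/\pi^n)$. I would first try to cite the known bound that syntomic cohomology of a $p$-adic local ring of (Krull/cotangent) dimension zero and lci-type is in degrees $\leq 2$; for example, for a quasisyntomic ring, $\Z_p(i)^{\mathrm{syn}}$ lies in degrees $\le 2$ when the ring has small cotangent complex amplitude. Failing that, I would use the fibre sequence relating $\Z_p(i)^{\mathrm{syn}}(\mathcal{O}_K/\pi^n)$, via the cartesian square of the lemma's proof, to the degree-$\leq 2$ bound for $\mathcal{O}_K$, using the fact that the relative term for a nilpotent extension is computed by the fibre of a map between complexes in degrees $[1,2]$.
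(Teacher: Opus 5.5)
Your proof is correct in outline, but it is organized differently from the paper's. The paper never runs the integral spectral sequence. It takes the $p$-adic statement directly from \cite[Corollary~2.16]{antieau_K-theory_2024}, which already identifies the $p$-adic $K$-groups with $\mathrm{H}^1$ and $\mathrm{H}^2$ of syntomic cohomology. It gets the $\Z[\tfrac{1}{p}]$ statement by reducing to $n=1$ and Quillen's computation for $\F_q$. It then glues the two using Lemma~\ref{lemmafinitechainringsdecomposition}. You use the lemma earlier, to show that the integral complexes $\Z(i)^{\mathrm{mot}}(\mathcal{O}_K/\pi^n)$ are concentrated in degrees $1$ and $2$ for $i\geq 1$, with $\mathrm{H}^2=0$ for $i=1$. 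You then degenerate the integral motivic spectral sequence by a degree count. Your version gets the isomorphism as an edge map of a single spectral sequence, so you never reassemble integral $K$-groups from their $p$-primary and prime-to-$p$ parts. The price is that you need two things the paper does not use directly: the amplitude statement for $\Z_p(i)^{\mathrm{syn}}(\mathcal{O}_K/\pi^n)$ itself, not just the $K$-theoretic corollary, and convergence of the integral spectral sequence.

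Two smaller points. First, the fallback in your last paragraph does not work. The square in the proof of Lemma~\ref{lemmafinitechainringsdecomposition} compares $\mathcal{O}_K/\pi^n$ with $\F_q$, whose syntomic cohomology vanishes in positive weight. So the relative term there is just $\Z_p(i)^{\mathrm{syn}}(\mathcal{O}_K/\pi^n)$ again, and the argument is circular. Moreover, $\mathcal{O}_K\to\mathcal{O}_K/\pi^n$ is not a nilpotent extension, so no bound transfers from $\mathcal{O}_K$. The amplitude $[1,2]$ is the real content of \cite{antieau_K-theory_2024}; your Breuil--Kisin sketch (relative Nygaard-filtered prismatic cohomology discrete, plus one degree of descent) is in the spirit of their argument, so cite them for this step. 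Second, in this spectral sequence $d_r$ goes from $\mathrm{H}^s(\Z(w))$ to $\mathrm{H}^{s+2r-1}(\Z(w+r-1))$, not to $\mathrm{H}^{s+r}$. Degeneration only becomes easier with the correct shift, since $2r-1\geq 3$ for $r\geq 2$.
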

	
	\begin{proof}
		Let $p$ be the residue characteristic of the discrete valuation ring $\mathcal{O}_K$. The result with $p$-adic coefficients is \cite[Corollary~$2.16$]{antieau_K-theory_2024}. The result with $\Z[\tfrac{1}{p}]$-coefficients reduces to the case $n=1$, where the result follows from the description of the (classical) motivic cohomology of finite fields. The integral result is then a consequence of Lemma~\ref{lemmafinitechainringsdecomposition}.
	\end{proof}
	
	\begin{theorem}[Motivic cohomology of finite chain rings, after \cite{antieau_K-theory_2024}]
		Let $\mathcal{O}_K$ be a discrete valuation ring of mixed characteristic $(0,p)$ and with finite residue field $\F_q$, $\pi$ be a uniformizer of $\mathcal{O}_K$, and $n \geq 1$ be an integer. Then for every integer $i \geq 4p^n$,\footnote{Note that this is not an optimal lower bound on the integer $i$. See \cite[Theorem~$1.4$]{antieau_K-theory_2024} for a more precise result, in terms of the ramification index of $\mathcal{O}_K$.} the motivic complex $$\Z(i)^{\emph{mot}}(\mathcal{O}_K/\pi^n) \in \mathcal{D}(\Z)$$ is concentrated in degree one, where it is given by a group of order $(q^i-1)q^{i(n-1)}$.
	\end{theorem}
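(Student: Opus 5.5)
The plan is to combine Lemma~\ref{lemmafinitechainringsdecomposition} with the $p$-adic computation of Antieau--Krause--Nikolaus. For $i \geq 1$ the lemma gives
$$\Z(i)^{\text{mot}}(\mathcal{O}_K/\pi^n) \simeq \Z_p(i)^{\text{syn}}(\mathcal{O}_K/\pi^n) \oplus \Z(i)^{\text{mot}}(\F_q)[\tfrac{1}{p}],$$
so I would treat the two summands separately and show that each is concentrated in degree one, with orders $q^{in}$ (up to the correct power of $p$) and $q^i-1$ respectively.

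\textbf{The $\Z[\tfrac1p]$-part.} By Quillen's computation of the $K$-theory of finite fields and the classical description of motivic cohomology of $\F_q$, the complex $\Z(i)^{\text{mot}}(\F_q)$ is concentrated in degree one for $i\geq 1$, where it is $\Z/(q^i-1)$. This group is already of order prime to $p$, so inverting $p$ changes nothing, consistent with the equivalence used in the proof of the lemma. It contributes a cyclic group of order $q^i-1$ in degree one.

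\textbf{The syntomic part.} Here I would invoke the results of \cite{antieau_K-theory_2024} on the syntomic cohomology of $\mathcal{O}_K/\pi^n$, which is computed there through prismatic cohomology relative to a Breuil--Kisin prism $\mathfrak{S}=W(\F_q)[[z]]$.
\begin{itemize}
\item \emph{Vanishing above degree one.} Their stable-range result says that for $i$ large compared with $p^n$ and the ramification index (and $i\geq 4p^n$ lies in this range), $\Z_p(i)^{\text{syn}}(\mathcal{O}_K/\pi^n)$ has cohomology only in degree one, i.e.\ $\mathrm{H}^0=\mathrm{H}^2=0$.
\item \emph{The order.} This is the key step. Since the complex is concentrated in one degree, its Euler characteristic computes the order of $\mathrm{H}^1$. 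I would compute the Euler characteristic via the fiber sequence
$$\Z_p(i)^{\text{syn}} \to \mathcal{N}^{\geq i}\Delta\{i\} \xrightarrow{\phi_i - \mathrm{can}} \Delta\{i\}.$$
For $i$ large, $\phi_i$ is topologically nilpotent relative to $\mathrm{can}$, or the reverse, so the Euler characteristic reduces to a length computation. That computation should give $p$-adic valuation $f\cdot i(n-1)$, where $q=p^f$. The answer is then $\lvert \mathrm{H}^1\rvert = q^{i(n-1)}$ times the $p$-part of $q^i-1$, which is $1$.
\end{itemize}
Alternatively, one can read off the order directly from $\lvert K_{2i-1}(\mathcal{O}_K/\pi^n;\Z_p)\rvert = q^{i(n-1)}$ and $K_{2i-2}=0$ in this range, as in \cite[Theorem~1.4]{antieau_K-theory_2024}. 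This uses the preceding proposition, which identifies $K_{2i-1}$ with $\mathrm{H}^1_{\text{mot}}$ and $K_{2i-2}$ with $\mathrm{H}^2_{\text{mot}}$, once one knows that $\mathrm{H}^0$ vanishes.

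\textbf{Conclusion.} Multiplying the two orders gives $(q^i-1)q^{i(n-1)}$, concentrated in degree one.

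\textbf{Main obstacle.} The hard part is checking that the explicit bound $i \geq 4p^n$ lies within the stable range of \cite{antieau_K-theory_2024}, which is stated in terms of the ramification index $e$. The first thing I would try is to bound $e$ relative to $n$, or to use the cruder bound that the Frobenius on the relevant truncated Breuil--Kisin prismatic complex becomes divisible by a high power once $i$ exceeds a multiple of $p^n$. This should give vanishing of $\mathrm{H}^0$ and $\mathrm{H}^2$ uniformly for $i\geq 4p^n$.
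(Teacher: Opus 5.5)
Your proposal is correct and follows the paper's proof: Lemma~\ref{lemmafinitechainringsdecomposition}, the classical computation $\Z(i)^{\text{mot}}(\F_q)\simeq \Z/(q^i-1)[-1]$, and, for the syntomic summand, the even-vanishing and order results \cite[Theorem~$1.4$ and Proposition~$1.5$]{antieau_K-theory_2024}, passed through the preceding proposition that identifies $\mathrm{H}^1$ and $\mathrm{H}^2$ with $K_{2i-1}$ and $K_{2i-2}$. Your Euler-characteristic sketch is unnecessary, since that order computation is exactly \cite[Proposition~$1.5$]{antieau_K-theory_2024}; likewise your ``main obstacle'' is not settled in the paper by any new prismatic argument, because the crude bound $i\geq 4p^n$ is simply taken to lie in the range of \cite[Theorem~$1.4$]{antieau_K-theory_2024} (see the footnote to the statement).
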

	
	\begin{proof}
		This is a consequence of Lemma~\ref{lemmafinitechainringsdecomposition}, the classical computation of the motivic cohomology of~$\F_q$, and \cite[Theorem~$1.4$ and Proposition~$1.5$]{antieau_K-theory_2024}.
	\end{proof}
	
	\begin{remark}[Nilpotence of $v_1$]\label{remarknilpotenceofv1}
		Antieau--Krause--Nikolaus also determine the nilpotence degree of the element $v_1$ in the mod $p$ syntomic cohomology of $\Z/p^n$ \cite[Theorem~$1.8$]{antieau_K-theory_2024}. This is a refinement of the key result in the study of $K(1)$-local $K$-theory of Bhatt--Clausen--Mathew \cite{bhatt_remarks_2020}. Note that this result on the nilpotence degree of $v_1$ can be reformulated, via Lemma~\ref{lemmafinitechainringsdecomposition}, as a statement on the mod $p$ motivic cohomology of $\Z/p^n$.  
	\end{remark}

    \section{Truncated polynomials}\label{Sectiontruncated}
	
	\vspace{-\parindent}
	\hspace{\parindent}
	
	In this section, we study the motivic cohomology of truncated polynomials, {\it i.e.}, the motivic cohomology of commutative rings of the form $R[x]/(x^e)$. Given a $\mathcal{D}(\Z)$-valued functor $F(-)$, a commutative ring $R$, and an integer $e \geq 1$, we use the notation
	$$F(R[x]/(x^e),(x)) := \text{fib}(F(R[x]/(x^e)) \longrightarrow F(R)),$$
	where the map is induced by the canonical projection $R[x]/(x^e) \rightarrow R$. 
	
	The relative $K$-theory $\text{K}(k[x]/(x^e),(x))$ of truncated polynomials over a perfect field $k$ of positive characteristic was computed by Hesselholt--Madsen \cite{hesselholt_K-theory_1997,hesselholt_cyclic_1997}, using topological restriction homology. Their calculation was reproved by Speirs \cite{speirs_K-theory_2020} using Nikolaus--Scholze's approach to topological cyclic homology \cite{nikolaus_topological_2018}, and by Mathew \cite{mathew_recent_2022} and Sulyma \cite{sulyma_floor_2023} using Bhatt--Morrow--Scholze's filtration on topological cyclic homology \cite{bhatt_topological_2019}. This last approach was then extended to mixed characteristic by Riggenbach \cite{riggenbach_K-theory_2025}. More precisely, Riggenbach used computations in prismatic cohomology to extend the previous result to a computation of the $p$-adic relative $K$-theory $\text{K}(R[x]/(x^e),(x);\Z_p)$ of perfectoid rings $R$, and also reproved the $p$-adic part of the known description of $\text{K}(\Z[x]/(x^e),(x))$, originally due to Angeltveit--Gerhardt--Hesselholt \cite{angeltveit_K-theory_2009}. 
	
	This recent progress would seem to indicate that $K$-theory calculations using equivariant stable homotopy may be pushed further by using cohomological techniques. Note however that the calculations in \cite{mathew_recent_2022,sulyma_floor_2023,riggenbach_K-theory_2025} are purely $p$-adic ones, as they rely on (instances of) prismatic cohomology. In fact, all of the previous integral calculations in mixed characteristic ({\it i.e.}, for $R$ the ring of integers of a number field) rely on a rational result of Soulé \cite{soule_rational_1980} and Staffeldt \cite{staffeldt_rational_1985}, who compute the ranks of the associated relative $K$-groups using equivariant homotopy theory. In this section, we revisit and extend this rational computation, and discuss some natural motivic refinements of the previous results.
	
	All of the above calculations use trace methods, via the Dundas--Goodwillie--McCarthy theorem. We first state the corresponding results at the level of cohomology theories.
	
	\begin{lemma}\label{lemmatruncatedpolynomialmotTC}
		Let $R$ be a commutative ring, and $e \geq 1$ be an integer. Then for every integer~$i \geq 0$, the natural map
		$$\Z(i)^{\emph{mot}}(R[x]/(x^e),(x)) \longrightarrow \Z(i)^{\emph{TC}}(R[x]/(x^e),(x))$$
		is an equivalence in the derived category $\mathcal{D}(\Z)$, where $\Z(i)^{\emph{TC}}$ denotes the $i^{\emph{th}}$ graded piece of the motivic filtration on integral topological cyclic homology $\emph{TC}$, as defined in \cite[Section~2.3]{bouis_motivic_2024}.
	\end{lemma}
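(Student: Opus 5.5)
The plan is to use the defining pullback square for motivic cohomology from \cite{bouis_motivic_2024}. For every qcqs scheme $X$ and every integer $i \geq 0$, $\Z(i)^{\text{mot}}$ is defined through a cartesian square
$$\begin{tikzcd}
\Z(i)^{\text{mot}}(X) \ar[r] \ar[d] & \Z(i)^{\text{TC}}(X) \ar[d] \\
\Z(i)^{\text{cdh}}(X) \ar[r] & L_{\text{cdh}}\Z(i)^{\text{TC}}(X)
\end{tikzcd}$$
in $\mathcal{D}(\Z)$. This is the motivic analogue of the Dundas--Goodwillie--McCarthy/Kerz--Strunk--Tamme square; in \cite{bouis_motivic_2024} it is either the definition or a basic property. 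Apply the square to both $R[x]/(x^e)$ and $R$, and take fibres along the projection $R[x]/(x^e) \to R$. Fibres commute with pullbacks, so the relative terms again form a cartesian square. The statement therefore reduces to showing that the relative bottom map
$$\Z(i)^{\text{cdh}}(R[x]/(x^e),(x)) \longrightarrow L_{\text{cdh}}\Z(i)^{\text{TC}}(R[x]/(x^e),(x))$$
is an equivalence. We prove this by showing that both sides vanish.

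The vanishing comes from nil-invariance of cdh sheaves. Any cdh sheaf $F$ on qcqs schemes satisfies $F(X) \simeq F(X_{\text{red}})$, because a nilpotent thickening is a cdh cover; more precisely, abstract blow-up squares with empty exceptional locus force this. Take $X=\text{Spec}(R[x]/(x^e))$ and $Y=\text{Spec}(R)$. The closed immersion $Y \to X$ is defined by the nilpotent ideal $(x)$, so it is a nil-thickening. Hence $F(X)\to F(Y)$ is an equivalence, since both sides agree with $F$ of the common reduction $(R[x]/(x^e))_{\text{red}} = R_{\text{red}}$. This is the same argument as in the proof of Lemma~\ref{lemmafinitechainringsdecomposition} for $i=0$. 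We apply it to $F=\Z(i)^{\text{cdh}}$ and to $F=L_{\text{cdh}}\Z(i)^{\text{TC}}$, both of which are cdh sheaves by construction. So both relative terms vanish, and the top relative map is an equivalence.

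The main point to check is that these objects really are cdh sheaves on all qcqs schemes, not only on finite-dimensional noetherian ones, where cdh descent and nil-invariance are classical. For $L_{\text{cdh}}$ this holds by definition as a sheafification. For $\Z(i)^{\text{cdh}}$ I would rely on \cite{bouis_motivic_2024}, where it is defined as the cdh sheafification of the left Kan extended classical motivic complexes (or via \cite{bouis_weibel_2025}). In either description it is a cdh sheaf, so the argument goes through for an arbitrary commutative ring $R$. If the definition in \cite{bouis_motivic_2024} instead uses a variant such as $\Z(i)^{\text{TC}}$ completed at primes or rationalised pieces, the same fibre argument applies to each piece of the pullback.
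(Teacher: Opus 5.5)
Your proof is correct and matches the paper's argument: the paper also combines the pullback description of $\Z(i)^{\text{mot}}$ in terms of $\Z(i)^{\text{TC}}$ and cdh sheaves (it cites \cite[Remark~3.21]{bouis_motivic_2024}) with nil-invariance of cdh sheaves. One small refinement: apply nil-invariance directly to the finitely presented nilpotent closed immersion $\text{Spec}(R) \to \text{Spec}(R[x]/(x^e))$, using the abstract blow-up square with empty complement, rather than passing through $R_{\text{red}}$, since the nilradical of $R$ need not be finitely generated.
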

	
	\begin{proof}
		This is a direct consequence of \cite[Remark~$3.21$]{bouis_motivic_2024}, and the fact that cdh sheaves are invariant under nilpotent extensions.
	\end{proof}
	
	\begin{corollary}\label{corollarytruncatedpolynomialpadicBMS}
		Let $R$ be a commutative ring, $e \geq 1$ be an integer, and $p$ be a prime number. Then for every integer $i \geq 0$, the natural map
		$$\Z_p(i)^{\emph{mot}}(R[x]/(x^e),(x)) \longrightarrow \Z_p(i)^{\emph{BMS}}(R[x]/(x^e),(x))$$
		is an equivalence in the derived category $\mathcal{D}(\Z_p)$, where $\Z_p(i)^{\emph{BMS}}$ denotes the weight-$i$ syntomic cohomology in the sense of \cite{bhatt_topological_2019}.
	\end{corollary}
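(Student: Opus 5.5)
The plan is to deduce the statement from Lemma~\ref{lemmatruncatedpolynomialmotTC} by $p$-completing both sides and then identifying the $p$-completed relative motivic filtration on $\mathrm{TC}$ with the Bhatt--Morrow--Scholze syntomic complexes. I take $\Z_p(i)^{\text{mot}}$ to mean the $p$-completion of $\Z(i)^{\text{mot}}$, as in \cite{bouis_motivic_2024}.

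First, $p$-completion is exact, so it commutes with taking fibres. Applying it to the equivalence of Lemma~\ref{lemmatruncatedpolynomialmotTC} therefore gives a natural equivalence
$$\Z_p(i)^{\text{mot}}(R[x]/(x^e),(x)) \simeq \Z(i)^{\text{TC}}(R[x]/(x^e),(x))^\wedge_p.$$
Second, I recall how $\Z(i)^{\text{TC}}$ is built in \cite[Section~2.3]{bouis_motivic_2024}. It is a pullback that glues the $p$-adic BMS syntomic pieces $\Z_p(i)^{\text{BMS}}$, over all primes $p$, with a rational piece given by derived de Rham or cyclic homology data. Its $p$-completion is exactly $\Z_p(i)^{\text{BMS}}$, functorially in the ring, because the rational and $\ell$-adic pieces for $\ell \neq p$ vanish after $p$-completion. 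I expect this to be stated there as a compatibility, so that $\Z(i)^{\text{TC}}(-)^\wedge_p \simeq \Z_p(i)^{\text{BMS}}(-)$ naturally on all commutative rings.

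Combining the two steps, and applying the identification to both $R[x]/(x^e)$ and $R$ along the projection, gives the claimed equivalence of relative terms. I should also check that the composite is the natural comparison map $\Z_p(i)^{\text{mot}} \to \Z_p(i)^{\text{BMS}}$. This holds because that map is, by definition, the $p$-completion of the map to $\Z(i)^{\text{TC}}$ followed by this identification.

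The main obstacle is making sure that $p$-completion of $\Z(i)^{\text{TC}}$ really recovers $\Z_p(i)^{\text{BMS}}$ for arbitrary, non-noetherian and non-$p$-complete, rings $R$. The BMS syntomic cohomology of a general ring is defined via its $p$-completion, so it is $p$-complete by construction. Moreover, $\mathrm{TC}/p \simeq \mathrm{TC}(-^\wedge_p)/p$ for the relevant filtration. If the reference only states this for $p$-complete rings, I would first reduce to that case using this insensitivity of the mod $p$ motivic filtration on $\mathrm{TC}$ to $p$-completion, and then conclude by derived Nakayama.
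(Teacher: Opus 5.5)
Your proposal is correct and is essentially the paper's argument: the paper deduces the statement directly from Lemma~\ref{lemmatruncatedpolynomialmotTC}, implicitly using that derived $p$-completion is exact and that $\Z(i)^{\text{TC}}(-)^\wedge_p \simeq \Z_p(i)^{\text{BMS}}(-)$ on all commutative rings (the rational and $\ell \neq p$ pieces of the defining pullback vanish after $p$-completion), which is exactly what you spell out. Your fallback reduction to $p$-complete rings is not needed, since this identification follows directly from the pullback definition of $\Z(i)^{\text{TC}}$ for arbitrary commutative rings.
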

	
	\begin{proof}
		This is a consequence of Lemma~\ref{lemmatruncatedpolynomialmotTC}.
	\end{proof}
	
	\begin{corollary}\label{corollarytruncatedpolynomialsmotdeRhamrational}
		Let $R$ be a commutative ring, and $e \geq 1$ be an integer. Then for every integer $i \geq 0$, there is a natural equivalence
		$$\Q(i)^{\emph{mot}}(R[x]/(x^e),(x)) \simeq \mathbb{L}\Omega^{<i}_{(R[x]/(x^e),(x))_{\Q}/\Q}[-1]$$
		in the derived category $\mathcal{D}(\Q)$.
	\end{corollary}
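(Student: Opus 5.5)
Starting from Lemma~\ref{lemmatruncatedpolynomialmotTC}, I will rationalise both sides. This gives a natural equivalence
$$\Q(i)^{\text{mot}}(R[x]/(x^e),(x)) \simeq \Q(i)^{\text{TC}}(R[x]/(x^e),(x)),$$
so it suffices to identify the rationalised graded pieces of the motivic filtration on $\text{TC}$ in relative form. In \cite[Section~2.3]{bouis_motivic_2024}, $\Z(i)^{\text{TC}}$ is built as a pullback. One corner is the product over primes of the BMS syntomic pieces $\Z_p(i)^{\text{BMS}}$. The other corner is the $i^{\text{th}}$ graded piece of the HKR-type filtration on $\text{HC}^-(-\otimes\Q/\Q)$, namely the Hodge-completed derived de Rham complex $\widehat{\mathbb{L}\Omega}^{\geq i}_{-_{\Q}/\Q}[2i]$. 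These are glued over $\big(\prod_p \Z_p(i)^{\text{BMS}}\big)_{\Q}$ via the relevant part of $\text{HP}$. Equivalently, $\Z(i)^{\text{TC}}$ is the fibre of $\text{can}-\varphi$ in a Beilinson-type fibre square.

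The first step is to show that the $p$-adic terms contribute nothing rationally to the relative theory. By Corollary~\ref{corollarytruncatedpolynomialpadicBMS} and its proof, the relative syntomic complex $\Z_p(i)^{\text{BMS}}(R[x]/(x^e),(x))$ should be controlled by relative prismatic data. For a nilpotent extension, the relevant fibre square should make the relative $\mathbb{L}W\Omega$/prismatic contributions on the two sides of the square cancel after inverting $p$. More concretely, I will use the Beilinson fibre square of Antieau--Mathew--Morrow--Nikolaus: for any ring $A$, the square relating $\Z_p(i)^{\text{BMS}}(A)$, $\Z_p(i)^{\text{BMS}}(A/p)$ and the $p$-completed derived de Rham pieces becomes cartesian after rationalisation. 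I will apply it to the Henselian pair $R[x]/(x^e) \to R$, where the relative term on the characteristic-$p$ side is nilpotent-invariant rationally. From this, the relative rationalised product over $p$ of the $\Z_p(i)^{\text{BMS}}$ is identified with the relative Hodge-truncated piece of the rationalised completed de Rham complex, and it cancels against the matching term in the pullback.

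What remains is a fibre sequence, relative to $(x)$,
$$\Q(i)^{\text{TC}} \to \widehat{\mathbb{L}\Omega}^{\geq i}_{-_{\Q}/\Q} \to \widehat{\mathbb{L}\Omega}_{-_{\Q}/\Q}.$$
Up to the shift conventions, this is Goodwillie's description of relative $\text{TC}\otimes\Q \simeq \text{HC}^-\otimes\Q \to \text{HP}\otimes\Q$ for nilpotent extensions, split into weights. Goodwillie's theorem says that relative $\text{HP}$ vanishes rationally for nilpotent ideals. At the graded level this is the statement that the relative Hodge-completed derived de Rham complex $\widehat{\mathbb{L}\Omega}_{(R[x]/(x^e),(x))_{\Q}/\Q}$ vanishes, since derived de Rham cohomology in characteristic zero is nil-invariant. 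Hence the relative $\Q(i)^{\text{TC}}$ is the cofibre of $\widehat{\mathbb{L}\Omega}^{\geq i}\to\widehat{\mathbb{L}\Omega}$ shifted by $[-1]$, that is $\mathbb{L}\Omega^{<i}[-1]$, which is the claimed formula. The Hodge truncation $\mathbb{L}\Omega^{<i}$ needs no completion, being a finite filtration.

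I expect the main obstacle to be the second paragraph: cleanly showing that the $p$-adic syntomic corners cancel rationally in the relative situation, uniformly in $R$ with no finiteness hypotheses. I would first try to deduce it from the known rational equivalence $\text{TC}(-;\Z_p)_{\Q}$ relative $\simeq \text{HC}^-(-;\Q_p)$ relative for Henselian nilpotent pairs, compatibly with motivic filtrations. Failing that, I would use the relative version of \cite[Remark~3.21]{bouis_motivic_2024} together with the analogous rational comparison of $\Q(i)^{\text{mot}}$ with the cdh-local derived de Rham term, whose relative part vanishes by nil-invariance of cdh sheaves.
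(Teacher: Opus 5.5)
Your last step is exactly the paper's. Once relative $\Q(i)^{\text{mot}}$ is identified with relative $\widehat{\mathbb{L}\Omega}^{\geq i}_{(R[x]/(x^e),(x))_{\Q}/\Q}$, the fibre sequence $\widehat{\mathbb{L}\Omega}^{\geq i}\to\widehat{\mathbb{L}\Omega}\to\mathbb{L}\Omega^{<i}$ and nil-invariance of $\widehat{\mathbb{L}\Omega}_{(-)/\Q}$ finish the proof. For that nil-invariance the paper cites cdh descent \cite[Lemma~4.5]{elmanto_motivic_2023} rather than Goodwillie's theorem. The gap is in how your main line obtains the identification.

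You work through the pullback square for $\Z(i)^{\text{TC}}$ and apply the Antieau--Mathew--Morrow--Nikolaus square one prime at a time. That square only yields statements after inverting $p$ for each fixed $p$. What enters the rationalised pullback, however, is $\big(\prod_p \Z_p(i)^{\text{BMS}}(R[x]/(x^e),(x))\big)\otimes\Q$, and rationalisation does not commute with the product over all primes. This is not a formality, because the characteristic $p$ relative terms you discard are nonzero for every $p$. For $R=\Z$ and $e\geq 2$, the perfectoid theorem of Section~\ref{Sectiontruncated} gives $\Z_p(i)^{\text{syn}}(\F_p[x]/(x^e),(x))\simeq \mathbb{W}_{ei}(\F_p)/V_e\mathbb{W}_i(\F_p)[-1]$. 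This is a nonzero group, killed by $p$ once $p>ei$. The product of these groups over all $p$ is not torsion, so it survives rationalisation. Hence ``nil-invariant after inverting $p$'' does not dispose of these terms globally. To run your route you would need integral control of the relevant fibres uniformly in $p$: bounded exponent, and vanishing for almost all $p$. That is precisely the nontrivial input that \cite[Corollary~4.31]{bouis_motivic_2024} and its consequences package via the restricted product of rigid-analytic de Rham cohomologies. Two smaller inaccuracies: the gluing term in that square is this restricted product, not $(\prod_p\Z_p(i)^{\text{BMS}})_{\Q}$; and the AMMN square requires henselianity along $(p)$, not along $(x)$.

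The fix is your own fallback, which is the paper's proof; it needs neither Lemma~\ref{lemmatruncatedpolynomialmotTC} nor any $p$-adic input. \cite[Corollary~4.66]{bouis_motivic_2024} compares $\Q(i)^{\text{mot}}$ with $\widehat{\mathbb{L}\Omega}^{\geq i}_{(-)_{\Q}/\Q}$ up to cdh-local terms. Cdh sheaves are invariant under nilpotent extensions, so those terms vanish on the pair $(R[x]/(x^e),(x))$. Hence the natural map from relative $\Q(i)^{\text{mot}}$ to relative $\widehat{\mathbb{L}\Omega}^{\geq i}_{(-)_{\Q}/\Q}$ is an equivalence. Your final paragraph then concludes as written; in particular, $\mathbb{L}\Omega^{<i}$ needs no completion since only finitely many Hodge steps appear.
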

	
	\begin{proof}
            By \cite[Corollary~$4.66$]{bouis_motivic_2024}, and because cdh sheaves are invariant under nilpotent extensions, the natural map
            $$\Q(i)^{\text{mot}}(R[x]/(x^e),(x)) \longrightarrow \widehat{\mathbb{L}\Omega}^{\geq i}_{(R[x]/(x^e),(x))_{\Q}/\Q}$$
            is an equivalence in the derived category $\mathcal{D}(\Q)$. Moreover, there is a natural fibre sequence
            $$\widehat{\mathbb{L}\Omega}^{\geq i}_{(R[x]/(x^e),(x))_{\Q}/\Q} \longrightarrow \widehat{\mathbb{L}\Omega}_{(R[x]/(x^e),(x))_{\Q}/\Q} \longrightarrow \widehat{\mathbb{L}\Omega}^{<i}_{(R[x]/(x^e),(x))_{\Q}/\Q}$$
            in the derived category $\mathcal{D}(\Q)$, where the Hodge-completion on the last term can be removed, since only finitely many steps of the Hodge filtration appear in this truncated de Rham complex. Again using that cdh sheaves are invariant under nilpotent extensions, the desired result is then a consequence of cdh descent for the presheaf $\widehat{\mathbb{L}\Omega}_{-/\Q}$ on commutative $\Q$-algebras \cite[Lemma~$4.5$]{elmanto_motivic_2023}.
	\end{proof}
	
	\begin{lemma}
		For every commutative ring $R$ and integer $e \geq 1$, the object
		$$\Z(0)^{\emph{mot}}\big(R[x]/(x^e),(x)\big)$$ is zero
		in the derived category $\mathcal{D}(\Z)$.
	\end{lemma}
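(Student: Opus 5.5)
The plan is to argue exactly as in the weight-zero case of Lemma~\ref{lemmafinitechainringsdecomposition}. By \cite[Example~$4.68$]{bouis_motivic_2024}, weight-zero motivic cohomology of any qcqs scheme is naturally identified with cdh cohomology with integral coefficients. So for every commutative ring $A$ there is a natural equivalence
$$\Z(0)^{\text{mot}}(A) \simeq R\Gamma_{\text{cdh}}(A,\Z)$$
in $\mathcal{D}(\Z)$. We apply this to both $A=R[x]/(x^e)$ and $A=R$. Naturality in $A$ identifies the fibre $\Z(0)^{\text{mot}}(R[x]/(x^e),(x))$ with the fibre of
$$R\Gamma_{\text{cdh}}(R[x]/(x^e),\Z) \longrightarrow R\Gamma_{\text{cdh}}(R,\Z),$$
the map being induced by the projection $R[x]/(x^e)\to R$.

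It then remains to see that this map is an equivalence. The closed immersion $\operatorname{Spec}(R)\to\operatorname{Spec}(R[x]/(x^e))$ is a nilpotent thickening, since the ideal $(x)$ satisfies $x^e=0$. Cdh sheaves are invariant under nilpotent extensions, so the map is an equivalence. Concretely, the abstract blowup square with $Z=\operatorname{Spec}(R)$, $Y=\emptyset$ and $X'=\emptyset$ is a cdh cover, and cdh descent for it gives the claim. The fibre therefore vanishes.

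An alternative route, as a cross-check, is Lemma~\ref{lemmatruncatedpolynomialmotTC}. It reduces the statement to the vanishing of $\Z(0)^{\text{TC}}$ on the relative term, which follows once one knows that $\Z(0)^{\text{TC}}$ is itself a cdh-local object, namely $R\Gamma_{\text{cdh}}(-,\Z)$ up to the comparison in \cite{bouis_motivic_2024}. This is less direct, so I would use the first argument.

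The only point needing care, and hence the main obstacle, is that \cite[Example~$4.68$]{bouis_motivic_2024} holds for arbitrary (qcqs) commutative rings and is natural in the ring. In the finite chain ring case this identification was used only for a specific ring, so I would check that the reference covers general $R$. Granting this, the argument is formal.
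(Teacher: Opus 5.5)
Your proof is correct and is the paper's argument: the paper likewise deduces the vanishing from the identification $\Z(0)^{\text{mot}} \simeq R\Gamma_{\text{cdh}}(-,\Z)$ of \cite[Example~4.68]{bouis_motivic_2024}, which holds for general rings, together with nil-invariance of cdh sheaves, and your abstract blowup square with $X'=\emptyset$ just spells that invariance out. One detail in your aside is off, though it plays no role in your main argument: $\Z(0)^{\text{TC}}$ is not $R\Gamma_{\text{cdh}}(-,\Z)$, since on $\Q$-algebras it is Hodge-completed derived de Rham cohomology, which is nonzero in degree one for $\Q[t^{\pm 1}]$.
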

	
	\begin{proof}
		This is a consequence of the fact that the motivic complex $\Z(0)^{\text{mot}}$ is a cdh sheaf \cite[Example~$4.68$]{bouis_motivic_2024}.
	\end{proof}
	
	\begin{lemma}\label{lemmatruncatedpolynomiale-1forQ}
		For any integers $e \geq 1$ and $i \geq 0$, the complex
		$$\mathbb{L}\Omega^{\leq i}_{(\Q[x]/(x^e),(x))/\Q} \in \mathcal{D}(\Q)$$
		is concentrated in degree zero, given by a $\Q$-vector space of dimension $e-1$.
	\end{lemma}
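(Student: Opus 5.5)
We would compute the complex explicitly, using the classical description of derived de Rham cohomology of a complete intersection in characteristic zero. Write $A=\Q[x]/(x^e)$, a quotient of the smooth $\Q$-algebra $P=\Q[x]$ by the regular element $x^e$, with ideal $I=(x^e)$. The key input is Illusie's (and Bhatt's) comparison. For such a quotient over $\Q$, the Hodge-completed derived de Rham complex $\widehat{\mathbb{L}\Omega}_{A/\Q}$ is the de Rham complex $\widehat{P}\to\widehat{P}\,dx$ of the $I$-adic completion $\widehat{P}=\Q[[x]]$. Under this identification, the Hodge filtration $\widehat{\mathbb{L}\Omega}^{\geq j}_{A/\Q}$ corresponds to the subcomplex $I^{j}\widehat{P}\to I^{j-1}\widehat{P}\,dx$, with the convention $I^{m}=\widehat{P}$ for $m\leq 0$. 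This is the step we expect to need the most care: we must check that the statement applies with the filtration $I^{j-\bullet}\Omega^\bullet$ on the nose. As a sanity check, we would compare graded pieces with the derived wedge powers of the two-term cotangent complex $L_{A/\Q}\simeq[I/I^2\to A\,dx]$, which gives $\mathrm{gr}^j\simeq \Gamma^j(I/I^2)\to \Gamma^{j-1}(I/I^2)\otimes A\,dx$ in degrees $0,1$. This matches the graded pieces $I^j/I^{j+1}\to (I^{j-1}/I^j)\,dx$ of the explicit filtration.

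Granting this, the truncation $\mathbb{L}\Omega^{\leq i}_{A/\Q}$ is the quotient of $\widehat{\mathbb{L}\Omega}_{A/\Q}$ by $\widehat{\mathbb{L}\Omega}^{\geq i+1}_{A/\Q}$. It is therefore the two-term complex
$$\Q[x]/(x^{e(i+1)}) \xrightarrow{\ d\ } \Q[x]/(x^{ei})\,dx,\qquad x^k\mapsto kx^{k-1}dx .$$
The cokernel of $d$ vanishes, since every $x^{k-1}dx$ with $k-1\leq ei-1$ is hit by $\tfrac1k x^k$ and we are in characteristic zero. Hence the complex is concentrated in degree zero. The kernel is spanned by $1$ and by the monomials $x^k$ with $ei+1\leq k\leq e(i+1)-1$, because these map to zero in the target. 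It therefore has dimension $1+(e-1)=e$. In the edge case $i=0$ the target is zero and the kernel is all of $\Q[x]/(x^e)$, again of dimension $e$.

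Finally we pass to the relative term, the fibre of the map to $\mathbb{L}\Omega^{\leq i}_{\Q/\Q}\simeq\Q[0]$. The map on $\mathrm{H}^0$ sends $1\mapsto 1$ and is thus surjective. The fibre is therefore concentrated in degree zero, given by the span of $x^{ei+1},\dots,x^{e(i+1)-1}$, which is a $\Q$-vector space of dimension $e-1$. Naturality is clear from the construction.
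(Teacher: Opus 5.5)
Your proof is correct, but it takes a different route from the paper. The paper never chooses a presentation of $\Q[x]/(x^e)$. It uses the weight grading (with $x$ in weight one) and the Euler derivation $x^j\mapsto jx^j$, in the style of Staffeldt's Proposition~5. The relative term is exactly the positive-weight part, and there the Euler derivation acts by an invertible scalar; this is what forces the degeneration and reduces the count to the cotangent complex. You instead invoke the filtered comparison theorem for the complete intersection $\Q[x]\to\Q[x]/(x^e)$ in characteristic zero (Illusie, \emph{Complexe cotangent et d\'eformations II}, Ch.~VIII; Bhatt). This identifies the Hodge filtration with $I^{j-\bullet}$ on $\Q[[x]]\to\Q[[x]]\,dx$. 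After that, everything is a monomial count in $\Q[x]/(x^{e(i+1)})\to\Q[x]/(x^{ei})\,dx$. Your kernel and cokernel computation, the edge cases, and the passage to the fibre over $\mathbb{L}\Omega^{\leq i}_{\Q/\Q}\simeq\Q$ are all right. What your route buys is an explicit model and an explicit basis $x^{ei+1},\dots,x^{e(i+1)-1}$ of the relative $\mathrm{H}^0$. The cost is a heavier input: the lci comparison has to be cited as a theorem. Matching graded pieces with $\Gamma^j(I/I^2)\to\Gamma^{j-1}(I/I^2)\otimes A\,dx$ is only a consistency check and does not by itself produce a filtered equivalence. The paper's argument is softer and applies to any positively graded $\Q$-algebra, with no lci hypothesis and no choice of presentation.
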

	
	\begin{proof}
		This follows from a standard argument using the natural grading of the $\Q$-algebra $\Q[x]/(x^e)$ and the $\Q$-linear derivation $d : \Q[x]/(x^e) \rightarrow \Q[x]/(x^e)$ given by $d(x^j) = jx^j$; see for instance the proof of \cite[Proposition~$5$]{staffeldt_rational_1985}.
	\end{proof}
	
	\begin{theorem}\label{theoremtruncatedpolynomialmain}
		Let $R$ be a commutative ring such that the cotangent complex $\mathbb{L}_{(R\otimes_{\Z} \Q)/\Q}$ vanishes ({\it e.g.}, if $R \otimes_{\Z} \Q$ is ind-étale over $\Q$),\footnote{See \cite{mondal_ind-etale_2025} for more on this condition.} and $e \geq 1$ be an integer. Then for every integer~$i \geq 1$, there is a natural equivalence
		$$\Q(i)^{\emph{mot}}(R[x]/(x^e),(x)) \simeq (R \otimes_{\Z} \Q)^{e-1}[-1]$$
		in the derived category $\mathcal{D}(\Q)$.
	\end{theorem}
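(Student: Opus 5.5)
By Corollary~\ref{corollarytruncatedpolynomialsmotdeRhamrational}, there is a natural equivalence
$$\Q(i)^{\text{mot}}(R[x]/(x^e),(x)) \simeq \mathbb{L}\Omega^{<i}_{(R[x]/(x^e),(x))_{\Q}/\Q}[-1],$$
so it suffices to compute the relative truncated derived de Rham complex of $R_{\Q}[x]/(x^e)$ over $\Q$, where $R_{\Q} := R\otimes_{\Z}\Q$. Note that $(R[x]/(x^e))_{\Q} \simeq R_{\Q}\otimes_{\Q}\Q[x]/(x^e)$. The plan is to remove the dependence on $R$ using the vanishing of $\mathbb{L}_{R_{\Q}/\Q}$, and then to apply Lemma~\ref{lemmatruncatedpolynomiale-1forQ}.

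\textbf{Step 1: base change to $R_{\Q}$.} The transitivity sequence for $\Q \to R_{\Q} \to R_{\Q}[x]/(x^e)$, together with $\mathbb{L}_{R_{\Q}/\Q}\simeq 0$, shows that
$$\mathbb{L}_{R_{\Q}[x]/(x^e)/\Q} \simeq \mathbb{L}_{R_{\Q}[x]/(x^e)/R_{\Q}} \simeq R_{\Q}\otimes_{\Q}\mathbb{L}_{\Q[x]/(x^e)/\Q}.$$
The same holds for all wedge powers. Since the graded pieces of the Hodge filtration are the shifted derived wedge powers of the cotangent complex, the filtered comparison map
$$R_{\Q}\otimes_{\Q}\mathbb{L}\Omega^{<i}_{\Q[x]/(x^e)/\Q} \longrightarrow \mathbb{L}\Omega^{<i}_{R_{\Q}[x]/(x^e)/\Q}$$
is an equivalence on graded pieces. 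As only finitely many graded pieces occur, it is an equivalence. Functoriality of this comparison in the map $R_{\Q}[x]/(x^e)\to R_{\Q}$, given by $x \mapsto 0$, then gives the analogous equivalence on relative terms:
$$\mathbb{L}\Omega^{<i}_{(R[x]/(x^e),(x))_{\Q}/\Q} \simeq R_{\Q}\otimes_{\Q}\mathbb{L}\Omega^{<i}_{(\Q[x]/(x^e),(x))/\Q}.$$

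\textbf{Step 2: the case $R=\Z$.} By Lemma~\ref{lemmatruncatedpolynomiale-1forQ}, applied with $i-1$ in place of $i$ (which is allowed since $i\geq 1$), the relative complex $\mathbb{L}\Omega^{<i} = \mathbb{L}\Omega^{\leq i-1}$ for $\Q[x]/(x^e)$ is concentrated in degree zero and is a $\Q$-vector space of dimension $e-1$. Tensoring over $\Q$ with $R_{\Q}$ is exact, so this gives $(R\otimes_{\Z}\Q)^{e-1}$ in degree zero. After the shift $[-1]$, we obtain $(R\otimes_{\Z}\Q)^{e-1}[-1]$, which is the claimed equivalence.

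\textbf{Main obstacle.} The expected difficulty is the base-change step. One must make the comparison map natural at the filtered level, via the left Kan extension definition of $\mathbb{L}\Omega$ from polynomial algebras. One must also check that vanishing of $\mathbb{L}_{R_{\Q}/\Q}$ forces $\mathbb{L}\Omega_{R_{\Q}/\Q}^{<i}\simeq R_{\Q}$. This holds because all the positive Hodge graded pieces vanish, so only $\mathrm{gr}^0 = R_{\Q}$ survives. A Künneth formula for truncated derived de Rham complexes of tensor products over $\Q$ then identifies $\mathbb{L}\Omega^{<i}_{R_{\Q}\otimes\Q[x]/(x^e)}$ with $R_{\Q}\otimes\mathbb{L}\Omega^{<i}_{\Q[x]/(x^e)}$, and gives the same identification on the relative terms.

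The naturality of the final isomorphism, and in particular its independence of choices, comes from taking the Lemma~\ref{lemmatruncatedpolynomiale-1forQ} identification to be the canonical one. Through the weight grading, this identification picks out the classes of $x,\dots,x^{e-1}$.
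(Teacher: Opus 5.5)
Your proposal is correct and follows the paper's proof: you reduce via Corollary~\ref{corollarytruncatedpolynomialsmotdeRhamrational} to the relative truncated derived de Rham complex, use $\mathbb{L}_{(R\otimes_{\Z}\Q)/\Q}\simeq 0$ together with the Künneth formula to identify it with $(R\otimes_{\Z}\Q)\otimes_{\Q}\mathbb{L}\Omega^{<i}_{(\Q[x]/(x^e),(x))/\Q}$, and conclude with Lemma~\ref{lemmatruncatedpolynomiale-1forQ} at level $i-1$. Your closing remark contains one minor inaccuracy, which does not affect the argument: for $i\geq 2$ the surviving classes live in $x$-weights $(i-1)e+1,\dots,ie-1$, not at $x,\dots,x^{e-1}$ (e.g.\ for $i=2$ they come from $\mathrm{gr}^1$, and $x,\dots,x^{e-1}$ are killed by $d$). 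Naturality in $R$ only needs some fixed basis of this $(e-1)$-dimensional $\Q$-vector space, so the argument stands.
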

	
	\begin{proof}
		By Corollary~\ref{corollarytruncatedpolynomialsmotdeRhamrational}, there is a natural equivalence
		$$\Q(i)^{\text{mot}}(R[x]/(x^e),(x)) \simeq \mathbb{L}\Omega^{<i}_{(R[x]/(x^e),(x))_{\Q}/\Q}[-1]$$
		in the derived category $\mathcal{D}(\Q)$. By the Künneth formula for derived de Rham cohomology, and because all the positive powers of the cotangent complex $\mathbb{L}_{(R\otimes_{\Z} \Q)/\Q}$ vanish, there is a natural equivalence
		$$\mathbb{L}\Omega^{<i}_{(R[x]/(x^e),(x))_{\Q}/\Q} \simeq \mathbb{L}\Omega^{<i}_{(\Q[x]/(x^e),(x))/\Q} \otimes_{\Q} (R \otimes_{\Z} \Q)$$
		in the derived category $\mathcal{D}(\Q)$. The result is then a consequence of Lemma~\ref{lemmatruncatedpolynomiale-1forQ}.
	\end{proof}
	
	When $R$ is the ring of integers of a number field, the following result is due to Soulé \cite{soule_rational_1980} when $e=2$, and to Staffeldt \cite{staffeldt_rational_1985} for $e \geq 2$ a general integer. Their proof uses rational homotopy theory, and ultimately reduces to a computation in cyclic homology.
	
	\begin{corollary}\label{corollarytruncatedpolynomialrationalKtheory}
		Let $R$ be a commutative ring such that the cotangent complex $\mathbb{L}_{(R\otimes_{\Z} \Q)/\Q}$ vanishes, and $e \geq 1$ be an integer. Then for every integer $n \in \Z$, there is a natural isomorphism
		$$\emph{K}_n(R[x]/(x^e),(x);\Q) \cong \left\{
		\begin{array}{ll}
			(R \otimes_{\Z} \Q)^{e-1} & \emph{if } n \emph{ is odd and } n \geq 1 \\
			0 & \emph{otherwise}
		\end{array}
		\right.$$
		of abelian groups.
	\end{corollary}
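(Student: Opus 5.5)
The plan is to deduce this from Theorem~\ref{theoremtruncatedpolynomialmain} by feeding it into the rational motivic spectral sequence. Rationally this spectral sequence degenerates via the Adams decomposition of rational $K$-theory. The relevant statement from \cite{bouis_motivic_2024} is that, for any qcqs scheme, there is a natural splitting
$$\mathrm{K}(-;\Q) \simeq \prod_{i \geq 0} \Q(i)^{\text{mot}}(-)[2i],$$
compatible with the Adams operations. This splitting is natural in the ring, so it passes to the relative terms defined as fibres along $R[x]/(x^e) \to R$. It gives
$$\mathrm{K}_n(R[x]/(x^e),(x);\Q) \cong \prod_{i\geq 0} \mathrm{H}^{2i-n}_{\text{mot}}\big(R[x]/(x^e),(x),\Q(i)\big).$$
If the splitting in \cite{bouis_motivic_2024} is instead stated as a finite filtration that splits rationally, the argument is the same. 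One only needs to check that, in each fixed degree $n$, only finitely many weights $i$ contribute, and this will be clear from the computation.

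Next I compute the weight-$i$ terms. For $i=0$, the relative complex $\Z(0)^{\text{mot}}(R[x]/(x^e),(x))$ vanishes by the lemma above, so $\Q(0)^{\text{mot}}$ also vanishes relatively. For $i \geq 1$, Theorem~\ref{theoremtruncatedpolynomialmain} identifies $\Q(i)^{\text{mot}}(R[x]/(x^e),(x))$ with $(R\otimes_\Z\Q)^{e-1}[-1]$. This complex is concentrated in cohomological degree $1$. Hence the weight-$i$ summand contributes only when $2i-n=1$, that is, when $n=2i-1$. In that case it contributes exactly $(R\otimes_\Z\Q)^{e-1}$.

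It follows that $\mathrm{K}_n(R[x]/(x^e),(x);\Q)$ is $(R\otimes_\Z\Q)^{e-1}$ when $n$ is odd and $n\geq 1$, and is $0$ otherwise. This covers negative $n$ and $n = 0$ as well, since no $i \geq 1$ gives $2i-1 \leq 0$. Naturality of the resulting isomorphism follows from naturality of the Adams splitting and of the equivalence in Theorem~\ref{theoremtruncatedpolynomialmain}.

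The main point to check is the first step: the precise form of the rational comparison between $\mathrm{K}(-;\Q)$ and $\bigoplus_i \Q(i)^{\text{mot}}[2i]$ for arbitrary (non-noetherian) commutative rings, and its compatibility with taking fibres. I would cite the rational splitting of the motivic filtration from \cite{bouis_motivic_2024} (via Adams operations), which holds for all qcqs schemes. Exactness of fibres is then automatic, because the splitting is a natural equivalence of functors.
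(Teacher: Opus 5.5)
Your proposal is correct and takes the same approach as the paper, which derives the corollary from Theorem~\ref{theoremtruncatedpolynomialmain} together with the rational Adams splitting of $K$-theory into weight-$i$ motivic cohomology, \cite[Corollary~4.60]{bouis_motivic_2024}. This is the natural splitting you invoke, and your separate handling of weight zero through the vanishing lemma, together with your degree bookkeeping $n=2i-1$, fills in the details that the paper's one-line proof leaves implicit.
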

	
	\begin{proof}
		This is a consequence of Theorem~\ref{theoremtruncatedpolynomialmain} and \cite[Corollary~$4.60$]{bouis_motivic_2024}.
	\end{proof}

        We now apply Theorem~\ref{theoremtruncatedpolynomialmain} to the case of the ring of integers of a number field, together with the $p$-adic computations of Riggenbach \cite{riggenbach_K-theory_2025}, to obtain the following integral result. Note that this result implies, via the Atiyah--Hirzebruch spectral sequence, an analogous computation for $K$-theory, which was first established, using independent techniques, by Angeltveit--Gerhardt--Hesselholt \cite{angeltveit_K-theory_2009}.

        \begin{corollary}\label{corollarytruncatedpolynomialintegral}
            Let $K$ be a number field, $\mathcal{O}_K$ be its ring of integers, and $e \geq 1$ be an integer. Then for any integers $i,n \geq 1$, there is an isomorphism
            $$\emph{H}^n_{\emph{mot}}(\mathcal{O}_K[x]/(x^e),(x),\Z(i)) \cong \left\{
		\begin{array}{llll}
			\mathcal{O}_K^{e-1} & \emph{if } n=1\\
			A_{i-1} & \emph{if } n=2\\
			0 & \emph{otherwise}
		\end{array}
		\right.$$
            of abelian groups, natural in $K$ for $n=1$, where $d$ is the degree of $K$ over $\Q$, $\mathfrak{D} \subseteq \mathcal{O}_K$ is the different ideal of $K$ over $\Q$, and $A_i$ is a finite group of order $((ei)!(i!)^{e-2})^d \times \vert \mathcal{O}_K/\mathfrak{D} \vert^{ei-i}.$ In particular, if $K=\Q$, then $A_i$ is a finite group of order $(ei!)(i!)^{e-2}$.
        \end{corollary}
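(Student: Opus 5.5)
The plan is to split the relative motivic complex $\Z(i)^{\text{mot}}(\mathcal{O}_K[x]/(x^e),(x))$ into its rational part and its $p$-adic parts, and to glue the pieces integrally with an arithmetic fracture square. Write $M := \Z(i)^{\text{mot}}(\mathcal{O}_K[x]/(x^e),(x))$.

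\textbf{Rational part.} Since $\mathcal{O}_K \otimes_{\Z} \Q = K$ is étale over $\Q$, Theorem~\ref{theoremtruncatedpolynomialmain} applies. It gives $M \otimes \Q \simeq K^{e-1}[-1]$ for all $i \geq 1$.

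\textbf{$p$-adic part.} For each prime $p$, Corollary~\ref{corollarytruncatedpolynomialpadicBMS} identifies $M^\wedge_p$ with the relative BMS syntomic complex $\Z_p(i)^{\text{BMS}}(\mathcal{O}_K[x]/(x^e),(x))$. Syntomic cohomology commutes with $p$-completion, and $(\mathcal{O}_K)^\wedge_p = \prod_{\mathfrak{p} \mid p} \mathcal{O}_{K_\mathfrak{p}}$. So this complex is the product over $\mathfrak{p} \mid p$ of the corresponding complexes for $\mathcal{O}_{K_\mathfrak{p}}[x]/(x^e)$. These are computed by Riggenbach's prismatic methods \cite{riggenbach_K-theory_2025}, via a Breuil--Kisin prism for $\mathcal{O}_{K_\mathfrak{p}}$; he does this for perfectoid bases and for $\Z_p$, and the same argument should go through for $\mathcal{O}_{K_\mathfrak{p}}$. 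I expect the outcome to be the following:
\begin{itemize}[label={}]
\item the complex is concentrated in degrees $1$ and $2$;
\item $H^1$ is torsion-free, isomorphic to $(\mathcal{O}_{K_\mathfrak{p}})^{e-1}$;
\item $H^2$ is a finite $p$-group.
\end{itemize}
The key input is then the $p$-part of the order of $H^2$ at weight $i$, which should be
$$\Big(\big(e(i-1)\big)!\,\big((i-1)!\big)^{e-2}\Big)^{d_\mathfrak{p}} \times \big|\mathcal{O}_{K_\mathfrak{p}}/\mathfrak{D}_\mathfrak{p}\big|^{(e-1)(i-1)}, \qquad d_\mathfrak{p} = [K_\mathfrak{p}:\Q_p].$$
Multiplying over all $\mathfrak{p}$ and all $p$ recovers $|A_{i-1}|$, since $d = \sum_{\mathfrak{p}} d_\mathfrak{p}$ and $\mathfrak{D}$ factors as the product of the local differents.

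This order computation is the main obstacle. I would approach it as follows:
\begin{itemize}[label={}]
\item Run Riggenbach's description of relative prismatic cohomology of $\mathcal{O}_{K_\mathfrak{p}}[x]/(x^e)$, which is a sum over weights $j$ coprime-graded pieces, through the syntomic fibre sequence $\mathrm{can} - \varphi$.
\item Compute the order of the cokernel in each graded piece by comparing lattices. The resulting Euler characteristic is a product of $p$-adic valuations of factorial-type constants.
\item Obtain the different factor from the comparison between $\mathcal{O}_{K_\mathfrak{p}}$ and the Nygaard/Hodge lattice, which is a length computation in $\mathbb{L}_{\mathcal{O}_{K_\mathfrak{p}}/\Z_p}$, since $H^0$ of that cotangent complex is $\Omega^1 = \mathcal{O}_{K_\mathfrak{p}}/\mathfrak{D}_\mathfrak{p}$.
\item Alternatively, cite the orders directly, matching the $K$-theoretic results of Angeltveit--Gerhardt--Hesselholt and Riggenbach weight by weight. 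This is legitimate because the syntomic complexes are concentrated in two degrees, so the motivic spectral sequence degenerates.
\end{itemize}

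\textbf{Gluing.} In the fracture square
$$M \to M \otimes \Q, \qquad M \to \textstyle\prod_p M^\wedge_p, \qquad \big(\textstyle\prod_p M^\wedge_p\big) \otimes \Q,$$
the $H^1$ terms give $\mathcal{O}_K^{e-1}$. This is the only lattice compatible with $K^{e-1}$ rationally and with $\prod_{\mathfrak{p}} \mathcal{O}_{K_\mathfrak{p}}^{e-1}$ $p$-adically, provided the maps are the natural ones (via $\mathrm{dlog}$ or $\log(1 + x a)$-type classes). Naturality in $K$ follows from this.

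For $n = 2$, $M \otimes \Q$ has no $H^2$, so $H^2(M)$ is torsion. It equals $\prod_p H^2(M^\wedge_p)$, which is finite of order $|A_{i-1}|$. All other degrees vanish because they vanish both rationally and $p$-adically for every $p$, using that $M$ has finitely generated cohomology.

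For $K = \Q$ we have $d = 1$ and $\mathfrak{D} = \Z$, so the formula specializes to the stated order of $A_i$.
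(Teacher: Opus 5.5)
Your architecture is the paper's: the rational part comes from Theorem~\ref{theoremtruncatedpolynomialmain}, the $p$-adic part from Riggenbach's syntomic computation, and the two are glued integrally. The gap is that the gluing is left unproved, and it carries all of the integral content.

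The rationalisation and the $p$-completions alone do not determine $M$. For instance, $\Z[-1]$ and $\Q[-1]\oplus(\Q/\Z)[-2]$ have the same rationalisation and the same $p$-completion for every $p$. Write $\widehat{\mathcal{O}}_K=\prod_p(\mathcal{O}_K)^\wedge_p$ and $\mathbb{A}_{K,f}=\widehat{\mathcal{O}}_K\otimes\Q$, and use that $\prod_pA_{i-1,p}$ is finite. Your fracture square then gives an exact sequence
\[
0\to H^1(M)\to K^{e-1}\oplus\widehat{\mathcal{O}}_K^{e-1}\xrightarrow{\alpha}\mathbb{A}_{K,f}^{e-1}\to H^2(M)\to\prod_pA_{i-1,p}\to 0.
\]
Here the $K^{e-1}$-component of $\alpha$ is $H^1$ of the gluing map $M_\Q\to(\prod_pM^\wedge_p)_\Q$.

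You need this component to be the standard embedding $K^{e-1}\hookrightarrow\mathbb{A}_{K,f}^{e-1}$ under your chosen identifications. Then $\ker\alpha=K^{e-1}\cap\widehat{\mathcal{O}}_K^{e-1}=\mathcal{O}_K^{e-1}$, and $\mathrm{coker}\,\alpha=0$ because $\mathbb{A}_{K,f}=K+\widehat{\mathcal{O}}_K$. If that component were, say, zero, you would instead get $H^1(M)=K^{e-1}$ and an infinite $H^2(M)$.

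Your claim that $H^2(M)$ ``equals'' $\prod_pH^2(M^\wedge_p)$ silently drops $\mathrm{coker}\,\alpha$. Your phrase ``provided the maps are the natural ones'' is exactly the statement that must be proved. It is not formal: the rational identification comes from derived de Rham cohomology (Corollary~\ref{corollarytruncatedpolynomialsmotdeRhamrational}), while the $p$-adic one comes from a computation in Nygaard-filtered Breuil--Kisin prismatic cohomology.

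The paper closes this gap by using the cartesian square of \cite[Corollary~4.31]{bouis_motivic_2024} instead. Its rational and adelic corners are derived de Rham and rigid-analytic de Rham cohomology, so the right vertical map is an explicit base-change map of de Rham complexes. The paper then checks that Riggenbach's description of $\Z_p(i)^{\mathrm{BMS}}$ is compatible with the de Rham one. This reduces to the compatibility between the Nygaard filtration on Breuil--Kisin twisted prismatic cohomology and the Hodge filtration on derived de Rham cohomology \cite[Construction~5.5.3]{bhatt_absolute_2022}. The same compatibility gives naturality in $K$ for $n=1$.

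Your appeal to ``$M$ has finitely generated cohomology'' would bypass the gluing for the bare isomorphism type. However, it is not justified a priori, since it is essentially the conclusion. It is also unnecessary for the vanishing outside degrees $1,2$, which already follows from the long exact sequence of the fracture square.

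Finally, the $p$-adic order computation you call the main obstacle is simply an input in the paper. It is cited from \cite[Theorem~6.4 and proof of Theorem~1.6]{riggenbach_K-theory_2025}, which treats $\mathcal{O}_K$ directly. Reading the motivic groups off the $K$-groups of \cite{angeltveit_K-theory_2009} by degeneration presupposes concentration in degrees $1$ and $2$, and that concentration must itself come from the syntomic computation.
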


        \begin{proof}
            By Lemma~\ref{lemmatruncatedpolynomialmotTC}, it suffices to compute the cohomology of the complex $$\Z(i)^{\text{TC}}(\mathcal{O}_K[x]/(x^e),(x)) \in \mathcal{D}(\Z).$$ By \cite[Corollary~$4.31$]{bouis_motivic_2024}, there is a natural cartesian square
            $$\begin{tikzcd}
			\Z(i)^{\text{TC}}(\mathcal{O}_K[x]/(x^e),(x)) \ar[r] \ar[d] & \widehat{\mathbb{L}\Omega}^{\geq i}_{(K[x]/(x^e),(x))/\Q} \ar[d] \\
			 \prod_{p \in \mathbb{P}} \Z_p(i)^{\text{BMS}}(\mathcal{O}_K[x]/(x^e),(x)) \ar[r] & \prod'_{p \in \mathbb{P}} \underline{\widehat{\mathbb{L}\Omega}}^{\geq i}_{(K^\wedge_p[x]/(x^e),(x))/\Q_p}
		\end{tikzcd}$$
            in the derived category $\mathcal{D}(\Z)$, where the left bottom corner is syntomic cohomology in the sense of \cite{bhatt_topological_2019} and the right bottom corner is rigid-analytic de Rham cohomology in the sense of \cite[Section~4.2]{bouis_motivic_2024}. 
            
            By \cite[Theorem~$6.4$ and proof of Theorem~$1.6$]{riggenbach_K-theory_2025}, there is a natural equivalence
            $$\Z_p(i)^{\text{BMS}}(\mathcal{O}_K[x]/(x^e),(x)) \simeq ((\mathcal{O}_K)^\wedge_p)^{e-1}[-1] \oplus A_{i-1,p}[-2]$$
            in the derived category $\mathcal{D}(\Z)$ for every prime number $p$, where $A_{i,p}$ is a finite group of order $p^{v_p(\vert A_i \vert)}$. By Theorem~\ref{theoremtruncatedpolynomialmain} and the proof of Corollary~\ref{corollarytruncatedpolynomialsmotdeRhamrational}, there is a natural equivalence
            $$\widehat{\mathbb{L}\Omega}^{\geq i}_{(K[x]/(x^e),(x))/\Q} \simeq K^{e-1}[-1]$$
            in the derived category $\mathcal{D}(\Z)$. Using the cdh descent result for rigid-analytic de Rham cohomology \cite[Corollary~$4.44$]{bouis_motivic_2024}, the same arguments as for the previous equivalence imply that there is a natural equivalence
            $$\prod_{p \in \mathbb{P}}{}' \,\underline{\widehat{\mathbb{L}\Omega}}^{\geq i}_{(K^\wedge_p[x]/(x^e),(x))/\Q_p} \simeq \Big(\prod_{p \in \mathbb{P}} (\mathcal{O}_K)^\wedge_p\Big)^{e-1}[-1]$$
            in the derived category $\mathcal{D}(\Z)$. The previous two equivalences are compatible by construction. Unwinding the prismatic computation \cite[Theorem~1.6]{riggenbach_K-theory_2025}, the only non-trivial compatibility to check between these equivalences is a consequence of the compatibility between the Nygaard filtration on (Breuil--Kisin twisted) absolute prismatic cohomology and the Hodge filtration on derived de Rham cohomology, which is \cite[Construction~5.5.3]{bhatt_absolute_2022}. The previous cartesian square then implies the desired result.
        \end{proof}
	
	We also deduce from the work of Riggenbach the following motivic interpretation of the analogous result in $K$-theory \cite[Theorem~$1.1$]{riggenbach_K-theory_2025}.
	
	\begin{theorem}[Truncated polynomials over perfectoids, after \cite{riggenbach_K-theory_2025}]
		Let $R$ be a perfectoid ring, and $e \geq 1$ be an integer. Then for every integer $i \geq 1$, there is a natural equivalence
		$$\Z_p(i)^{\emph{mot}}(R[x]/(x^e),(x)) \simeq \mathbb{W}_{ei}(R)/V_e\mathbb{W}_i(R)[-1]$$
		in the derived category $\mathcal{D}(\Z_p)$, where $\mathbb{W}(R)$ denotes the big Witt vectors of $R$, and $V$ the associated Verschiebung operator.
	\end{theorem}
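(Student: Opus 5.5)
By Corollary~\ref{corollarytruncatedpolynomialpadicBMS}, the map $\Z_p(i)^{\text{mot}}(R[x]/(x^e),(x)) \to \Z_p(i)^{\text{BMS}}(R[x]/(x^e),(x))$ is an equivalence in $\mathcal{D}(\Z_p)$. This holds for any commutative ring $R$, so in particular for the perfectoid ring $R$. The plan is therefore to reduce the statement entirely to a computation of relative syntomic cohomology in the sense of Bhatt--Morrow--Scholze. That computation is exactly the one carried out by Riggenbach. Here $\Z_p(i)^{\text{mot}}$ means the $p$-completion of $\Z(i)^{\text{mot}}$. The first step is to check that the relative term is already $p$-complete on the BMS side, so that no rational or $\ell$-adic contributions interfere. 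Since BMS syntomic cohomology is $p$-complete by definition, this is automatic once we use Corollary~\ref{corollarytruncatedpolynomialpadicBMS}.

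The second step is to invoke \cite[Theorem~6.4 and the proof of Theorem~1.1]{riggenbach_K-theory_2025}. There Riggenbach computes, for $R$ perfectoid, the relative prismatic cohomology of $R[x]/(x^e)$ relative to $R$ together with its Nygaard filtration and Frobenius. He deduces that the weight-$i$ relative syntomic cohomology is concentrated in cohomological degree one, and identifies it with $\mathbb{W}_{ei}(R)/V_e\mathbb{W}_i(R)$. He works with the presentation $R[x]/(x^e) = R[z] \otimes_{R[z]} R$ with $z \mapsto x^e$, resp.\ the $\delta$-ring $A_{\inf}(R)[z]$, and with graded pieces in the $x$-grading. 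I would extract from his argument the statement at the level of each weight $i$. His main theorem is stated for $K$-theory, which he obtains from the BMS motivic filtration on $\mathrm{TC}$ by noting that the graded pieces are concentrated in a single degree, so that the filtration spectral sequence degenerates. So the weight-$i$ syntomic statement is an intermediate result in his paper rather than a consequence to be reverse-engineered.

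The main obstacle is naturality: the equivalence should be natural in $R$, not just an abstract isomorphism of groups. I would handle it by checking that Riggenbach's identification is built from functorial constructions. These are the identification $A_{\inf}(R)/\xi$-type descriptions of the relevant graded pieces of prismatic cohomology, and the fibre of $\varphi - \mathrm{can}$ on $\mathcal{N}^{\geq i}$. I would also check that the identification with big Witt vectors comes from the standard natural comparison between $p$-typical pieces of graded $A_{\inf}$-modules and truncated big Witt vectors $\mathbb{W}_{ei}(R)$. Under this comparison the Verschiebung $V_e$ corresponds to the map induced by $z \mapsto x^e$ on weights $j \mapsto ej$. If any step there uses choices, such as a choice of generator $\xi$, I would remove the dependence by passing to Breuil--Kisin twists, which are canonical. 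Composing with the equivalence of Corollary~\ref{corollarytruncatedpolynomialpadicBMS} then gives the claimed natural equivalence.
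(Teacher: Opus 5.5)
Your proposal is correct and follows the paper's proof: reduce via Corollary~\ref{corollarytruncatedpolynomialpadicBMS} to relative BMS syntomic cohomology, then read off Riggenbach's weight-$i$ computation (the paper points specifically to the proof of \cite[Corollary~6.5]{riggenbach_K-theory_2025}). One small correction: the presentation you mean is $R[x]\otimes_{R[z]}R$, with $z\mapsto x^e$ and $z\mapsto 0$, not $R[z]\otimes_{R[z]}R$.
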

	
	\begin{proof}
		This is a consequence of \cite[proof of Corollary~$6.5$]{riggenbach_K-theory_2025} and Corollary~\ref{corollarytruncatedpolynomialpadicBMS}.
	\end{proof}
	
	\begin{remark}[Cuspidal curves]
		The algebraic $K$-theory of cuspidal curves ({\it i.e.}, curves that are defined by an equation of the form $y^a-x^b$, for $a,b\geq 2$ coprime integers) was completely determined over a perfect $\F_p$-algebra by Hesselholt--Nikolaus \cite{hesselholt_algebraic_2020}, using Nikolaus--Scholze's approach \cite{nikolaus_topological_2018} to topological cyclic homology. This result was then generalised to mixed characteristic perfectoid rings by Riggenbach \cite{riggenbach_K-theory_2023}, ultimately relying on computations in relative topological Hochschild homology. It would seem that the associated Atiyah--Hirzebruch spectral sequence should degenerate in this context, thus providing a similar computation of the motivic cohomology of cuspidal curves. An interesting question would be whether these results can be reproved, or even extended to more general base rings, using techniques from prismatic cohomology and derived de Rham cohomology.
	\end{remark}

    \section{Perfect and semiperfect rings}\label{Sectionperfect}
	
	\vspace{-\parindent}
	\hspace{\parindent}
	
	Let $p$ be a prime number. It was proved by Kratzer \cite[Corollary~$5.5$]{kratzer_lambda_1980} that for every perfect $\F_p$-algebra $R$ and every integer $n \geq 1$, the $K$-group $\text{K}_n(R)$ is uniquely $p$-divisible (see also \cite{antieau_K-theory_2022} for a mixed characteristic generalisation). It was also proved by Kelly--Morrow that for every $\F_p$-algebra~$R$ with perfection $R_{\text{perf}}$, the natural map $\text{K}(R) \rightarrow \text{K}(R_{\text{perf}})$ is an equivalence after inverting $p$ (\cite[Lemma~$4.1$]{kelly_k-theory_2021}, see also \cite[Example~$2.1.11$]{elmanto_perfection_2020} and \cite[Theorem~$3.1.2$ and Proposition~$3.3.1$]{coulembier_K-theory_2023} for different proofs). The following result is a motivic refinement of these two facts. 
	
	\begin{theorem}[Motivic cohomology of perfect $\F_p$-schemes, after \cite{elmanto_motivic_2023}]\label{theoremmotiviccohomologyofperfectschemes}
		Let $X$ be a qcqs $\F_p$-scheme.
		\begin{enumerate}
			\item For every integer $i \geq 0$, the natural map
			$$\Z(i)^{\emph{mot}}(X)[\tfrac{1}{p}] \longrightarrow \Z(i)^{\emph{mot}}(X_{\emph{perf}})[\tfrac{1}{p}]$$
			is an equivalence in the derived category $\mathcal{D}(\Z[\tfrac{1}{p}])$.
			\item For every integer $i \geq 1$, the natural map
			$$\Z(i)^{\emph{mot}}(X_{\emph{perf}}) \longrightarrow \Z(i)^{\emph{mot}}(X_{\emph{perf}})[\tfrac{1}{p}]$$
			is an equivalence in the derived category $\mathcal{D}(\Z)$.
		\end{enumerate}
	\end{theorem}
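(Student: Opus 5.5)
The plan is to use the Elmanto--Morrow description of motivic cohomology of $\F_p$-schemes as a pullback
$$\Z(i)^{\text{mot}}(X) \simeq \Z(i)^{\text{cdh}}(X) \times_{\Z_p(i)^{\text{cdh}}(X)} \Z_p(i)^{\text{syn}}(X),$$
where $\Z(i)^{\text{cdh}}$ is the cdh sheafification of the classical $\mathbb{A}^1$-invariant motivic cohomology, i.e. $L_{\text{cdh}}$ applied to left Kan extended classical motivic cohomology, and the $p$-adic terms are syntomic cohomology in the sense of Bhatt--Morrow--Scholze, which in characteristic $p$ is $R\Gamma(-,W\Omega^i_{\log})[-i]$. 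Alternatively we can use the fibre sequence relating $\Z(i)^{\text{mot}}$, $\Z(i)^{\text{cdh}}$ and the fibre of $\Z_p(i)^{\text{syn}} \to L_{\text{cdh}}\Z_p(i)^{\text{syn}}$. Both parts of the theorem should then follow from this pullback together with known properties of each term.

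For $(1)$: after inverting $p$, the $p$-adic terms vanish because they are derived $p$-complete. Hence $\Z(i)^{\text{mot}}(X)[\tfrac1p] \simeq \Z(i)^{\text{cdh}}(X)[\tfrac1p]$, and it remains to see that $\Z(i)^{\text{cdh}}[\tfrac1p]$ is invariant under perfection. The absolute Frobenius $F \colon X \to X$ is a universal homeomorphism, so it is an isomorphism in the cdh topology up to the relevant nilpotent and purely inseparable issues. Concretely, I would reduce via cdh descent to smooth schemes over perfect fields, or to valuation rings as cdh points. There the classical motivic cohomology satisfies that Frobenius acts on $\Z(i)$ by multiplication by $p^i$; this is the standard fact that $F^*$ on Chow groups and higher Chow groups is multiplication by $p^{\dim}$, or one can use Voevodsky's result that $\mathbb{A}^1$-invariant cdh sheaves with $\Z[\tfrac1p]$-coefficients are invariant under universal homeomorphisms. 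So $F^*$ is invertible after inverting $p$. Since $\Z(i)^{\text{cdh}}$ commutes with filtered colimits of rings (finitary), passing to the colimit $X_{\text{perf}} = \lim_F X$ gives the equivalence.

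For $(2)$: I must show that $\Z(i)^{\text{mot}}(X_{\text{perf}})/p \simeq 0$ for $i \geq 1$. By Elmanto--Morrow, mod $p$ motivic cohomology of $\F_p$-schemes is given by a pullback of $\Z/p(i)^{\text{cdh}}$ and $R\Gamma(-,\Omega^i_{\log})[-i]$ over its cdh sheafification. On perfect schemes the cotangent complex vanishes, so $\Omega^i = 0$ for $i \geq 1$, and hence $\Omega^i_{\log}$ vanishes. Thus $\Z_p(i)^{\text{syn}}(X_{\text{perf}}) = 0$, and likewise for the cdh-local version, since cdh sheafification can be computed on perfect valuation rings. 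So $\Z(i)^{\text{mot}}(X_{\text{perf}}) \simeq \Z(i)^{\text{cdh}}(X_{\text{perf}})$. It then remains to see that $\Z/p(i)^{\text{cdh}}$ vanishes on perfect schemes. By Geisser--Levine, $\Z/p(i)$ of smooth schemes is $\Omega^i_{\log}[-i]$. The left Kan extension to perfect rings, followed by cdh sheafification, is then a colimit along Frobenius, which acts by $p^i$ and is therefore zero mod $p$ for $i \geq 1$. This gives $\Z(i)^{\text{mot}}(X_{\text{perf}})/p = 0$, i.e. the map to $[\tfrac1p]$ is an equivalence.

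The main obstacle is making the Frobenius-acts-by-$p^i$ argument rigorous at the level of the cdh-sheafified, left Kan extended complexes. The key step will be a careful reduction to henselian valuation rings, where everything is computed by Geisser--Levine and Gabber's rigidity.
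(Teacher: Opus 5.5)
Your argument for (2) is essentially sound, but your argument for (1) has a genuine gap. You assert that after inverting $p$ the $p$-adic corners of the Elmanto--Morrow square vanish ``because they are derived $p$-complete''. This is false: derived $p$-complete complexes are not killed by inverting $p$ (already $\Z_p[\tfrac1p]=\Q_p$).

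Here is a concrete counterexample. Take $X=\mathrm{Spec}(\F_p(t))$ and $i=1$. The identification $\Z_p(1)^{\text{syn}}\simeq R\Gamma_{\text{ét}}(-,\mathbb{G}_m)^\wedge_p[-1]$ of \cite{bhatt_topological_2019} gives $\mathrm{H}^1(\Z_p(1)^{\text{syn}}(X))\cong(\F_p(t)^\times)^\wedge_p$, a nonzero torsionfree group. Since fields are cdh-local, the same holds for $L_{\text{cdh}}\Z_p(1)^{\text{syn}}(X)$.

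What actually dies after inverting $p$ is only the fibre $F:=\mathrm{fib}(\Z_p(i)^{\text{syn}}\to L_{\text{cdh}}\Z_p(i)^{\text{syn}})\simeq\mathrm{fib}(\Z(i)^{\text{mot}}\to\Z(i)^{\text{cdh}})$. That is a real theorem, the weight-wise counterpart of $K[\tfrac1p]\simeq KH[\tfrac1p]$ for $\F_p$-schemes, not a formality. It also cannot be sidestepped. For $i\ge1$ both syntomic corners vanish on $X_{\text{perf}}$ but not on $X$ (same example). So (1) fails for them individually and holds for $\Z(i)^{\text{mot}}$ only through the cancellation in $F$.

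Relatedly, the heuristic that Frobenius is ``an isomorphism in the cdh topology'' is wrong. Fields are cdh points, so $\mathrm{Spec}(\F_p(t^{1/p}))\to\mathrm{Spec}(\F_p(t))$ is not a cdh cover.

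The missing input is exactly what the paper uses, and it makes the decomposition unnecessary:
\begin{itemize}
\item $\Z(i)^{\text{mot}}$ itself is finitary \cite[Theorem~4.29\,(4)]{elmanto_motivic_2023}.
\item Frobenius acts on $\Z(i)^{\text{mot}}$ by multiplication by $p^i$ \cite[Theorem~4.34]{elmanto_motivic_2023}. This settles the coherence issue you flag at the end, by combining the classical statement \cite{geisser_k-theory_2000} on one corner with the syntomic one \cite{antieau_beilinson_2020} on the other.
\end{itemize}
Then $\Z(i)^{\text{mot}}(X_{\text{perf}})\simeq\mathrm{colim}\big(\Z(i)^{\text{mot}}(X)\xrightarrow{p^i}\Z(i)^{\text{mot}}(X)\xrightarrow{p^i}\cdots\big)$, which gives (1) at once. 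Since Frobenius is an automorphism of $X_{\text{perf}}$, multiplication by $p^i$ is invertible on $\Z(i)^{\text{mot}}(X_{\text{perf}})$, which for $i\ge1$ is (2).

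These two facts also repair your own route. $F$ becomes finitary with Frobenius acting by $p^i$, and $F(X_{\text{perf}})=0$ by your computation, so $F(X)[\tfrac1p]\simeq F(X_{\text{perf}})=0$.

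In (2), replace ``cdh sheafification can be computed on perfect valuation rings'' by the same device, since the cdh site of $X_{\text{perf}}$ involves non-perfect schemes. $L_{\text{cdh}}\F_p(i)^{\text{syn}}$ is finitary and Frobenius acts on it by zero for $i\ge1$. Hence it vanishes on $X_{\text{perf}}$, and so does its $p$-adic version.
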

	
	\begin{proof}
		By \cite[Theorem~$4.34$]{elmanto_motivic_2023},\footnote{This result is proved as a consequence of the same result in classical motivic cohomology \cite{geisser_k-theory_2000} and in syntomic cohomology \cite{antieau_beilinson_2020}, and ultimately goes back to the fact that the Frobenius acts by multiplication by $p^i$ on the logarithmic de Rham--Witt sheaf $W\Omega^i_{\text{log}}$.} for every integer $i \geq 0$, the natural map
		$$\phi_X^\ast : \Z(i)^{\text{mot}}(X) \longrightarrow \Z(i)^{\text{mot}}(X)$$
		induced by the absolute Frobenius $\phi_X : X \rightarrow X$ of a qcqs $\F_p$-scheme $X$ is multiplication by $p^i$. In particular, this natural map is an equivalence after inverting $p$, and $(1)$ is a consequence of this and the fact that the presheaf $\Z(i)^{\text{mot}}$ is finitary \cite[Theorem~$4.29\,(4)$]{elmanto_motivic_2023}. Similarly, the same result applied to the perfect $\F_p$-scheme $X_{\text{perf}}$ implies that multiplication by $p^i$ on the complex $\Z(i)^{\text{mot}}(X_{\text{perf}}) \in \mathcal{D}(\Z)$ is an equivalence. If $i \geq 1$, this is equivalent to the fact that the natural map
		$$\Z(i)^{\text{mot}}(X_{\text{perf}}) \longrightarrow \Z(i)^{\text{mot}}(X_{\text{perf}})[\tfrac{1}{p}]$$
		is an equivalence in the derived category $\mathcal{D}(\Z)$.
	\end{proof}
	
	\begin{remark}[Negative $K$-groups of perfect $\F_p$-algebras]\label{remarknegativeKgroupsperfectalgebras}
		It is possible to construct examples of perfect $\F_p$-algebras whose negative $K$-groups are not $p$-divisible \cite[Section~$3.3$]{coulembier_K-theory_2023}. Theorem~\ref{theoremmotiviccohomologyofperfectschemes}\,$(2)$ states that the only non-$p$-divisible information in the negative $K$-groups of a perfect $\F_p$-algebra $R$ actually come from weight zero motivic cohomology, {\it i.e.}, from the complex $R\Gamma_{\text{cdh}}(R,\Z)$ \cite[Example~$4.68$]{bouis_motivic_2024}.
	\end{remark}
	
	Recall that a $\F_p$-algebra is {\it semiperfect} if its Frobenius is surjective.
	
	\begin{corollary}[Motivic cohomology of semiperfect $\F_p$-algebras]
            Let $S$ be a semiperfect $\F_p$-algebra. Then for every integer $i \geq 1$, there is a natural fibre sequence
            $$\Z(i)^{\emph{mot}}(S) \longrightarrow \Z(i)^{\emph{mot}}(S_{\emph{perf}})[\tfrac{1}{p}] \longrightarrow (\Q_p/\Z_p)(i)^{\emph{syn}}(S)$$
            in the derived category $\mathcal{D}(\Z)$, where the last term $(\Q_p/\Z_p)(i)^{\emph{syn}}(S)$ denotes the cofibre of the canonical map $\Z_p(i)^{\emph{syn}}(S) \rightarrow \Q_p(i)^{\emph{syn}}(S)$.
	\end{corollary}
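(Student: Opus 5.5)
The natural starting point is the Elmanto--Morrow pullback square for $\F_p$-algebras. For every qcqs $\F_p$-scheme $X$ and $i \geq 0$ it reads
$$\begin{tikzcd}
\Z(i)^{\text{mot}}(X) \ar[r] \ar[d] & \Z_p(i)^{\text{syn}}(X) \ar[d] \\
\Z(i)^{\text{cdh}}(X) \ar[r] & L_{\text{cdh}}\Z_p(i)^{\text{syn}}(X)
\end{tikzcd}$$
(equivalently, \cite[Proposition~$3.29$]{bouis_motivic_2024} with a cdh-local bottom row). I would apply this square twice, to $S$ and to $S_{\text{perf}}$, and compare the two copies along the map $S \to S_{\text{perf}}$.

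Since $S$ is semiperfect, the kernel of $S \to S_{\text{perf}}$ is locally nilpotent, so $S \to S_{\text{perf}}$ is a universal homeomorphism. More precisely, $S_{\text{perf}} = \operatorname{colim}$ of Frobenius on $S$ is the reduction of $S$ modulo a nil ideal. Cdh sheaves are invariant under nilpotent extensions and are finitary, so both bottom terms take the same value on $S$ and on $S_{\text{perf}}$. Hence the square for $S$ and the square for $S_{\text{perf}}$ combine into a cartesian square
$$\begin{tikzcd}
\Z(i)^{\text{mot}}(S) \ar[r] \ar[d] & \Z_p(i)^{\text{syn}}(S) \ar[d] \\
\Z(i)^{\text{mot}}(S_{\text{perf}}) \ar[r] & \Z_p(i)^{\text{syn}}(S_{\text{perf}}).
\end{tikzcd}$$
For perfect rings and $i \geq 1$, $\Z_p(i)^{\text{syn}}(S_{\text{perf}}) = 0$. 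This follows from the same logarithmic de Rham--Witt argument used in Lemma~\ref{lemmafinitechainringsdecomposition}: $W\Omega^i_{\log}$ vanishes for perfect rings when $i \geq 1$, or alternatively Frobenius acts by $p^i$ on a derived $p$-complete object on which it is invertible. So the square becomes a fibre sequence
$$\Z(i)^{\text{mot}}(S) \to \Z(i)^{\text{mot}}(S_{\text{perf}}) \oplus \Z_p(i)^{\text{syn}}(S) \to 0,$$
which is not yet the claim. To reach the claimed form, invert $p$. By Theorem~\ref{theoremmotiviccohomologyofperfectschemes}\,(2), $\Z(i)^{\text{mot}}(S_{\text{perf}}) \simeq \Z(i)^{\text{mot}}(S_{\text{perf}})[\tfrac1p]$, and by Theorem~\ref{theoremmotiviccohomologyofperfectschemes}\,(1), $\Z(i)^{\text{mot}}(S)[\tfrac1p] \simeq \Z(i)^{\text{mot}}(S_{\text{perf}})[\tfrac1p]$.

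Now consider the arithmetic fracture square for $\Z(i)^{\text{mot}}(S)$, namely the square with corners $\Z(i)^{\text{mot}}(S)$, $\Z(i)^{\text{mot}}(S)^\wedge_p$, $\Z(i)^{\text{mot}}(S)[\tfrac1p]$ and $\Z(i)^{\text{mot}}(S)^\wedge_p[\tfrac1p]$. Since $\Z(i)^{\text{mot}}/p \simeq \Z_p(i)^{\text{syn}}/p$ for $\F_p$-algebras, we get $\Z(i)^{\text{mot}}(S)^\wedge_p \simeq \Z_p(i)^{\text{syn}}(S)$. Taking fibres of the horizontal maps gives the fibre sequence
$$\Z(i)^{\text{mot}}(S) \to \Z(i)^{\text{mot}}(S)[\tfrac1p] \to \Z_p(i)^{\text{syn}}(S)[\tfrac1p]\big/\Z_p(i)^{\text{syn}}(S),$$
where the last term is $(\Q_p/\Z_p)(i)^{\text{syn}}(S)$ by definition, with $\Q_p(i)^{\text{syn}} = \Z_p(i)^{\text{syn}}[\tfrac1p]$. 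Substituting $\Z(i)^{\text{mot}}(S)[\tfrac1p] \simeq \Z(i)^{\text{mot}}(S_{\text{perf}})[\tfrac1p]$ from Theorem~\ref{theoremmotiviccohomologyofperfectschemes}\,(1) yields exactly the claimed sequence, naturally in $S$. The first cartesian square then serves only as a consistency check.

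The main point to verify is the identification $\Z(i)^{\text{mot}}(S)^\wedge_p \simeq \Z_p(i)^{\text{syn}}(S)$ for $\F_p$-algebras. This should follow from the Elmanto--Morrow square, because the cdh-local terms satisfy $\Z(i)^{\text{cdh}}/p \simeq (L_{\text{cdh}}\Z_p(i)^{\text{syn}})/p$, by the classical Geisser--Levine comparison $\Z(i)^{\mathbb{A}^1}/p \simeq W_1\Omega^i_{\log}[-i]$ on smooth schemes, and cdh sheafification commutes with reduction mod $p$. Once this is checked, the fracture-square step is formal.
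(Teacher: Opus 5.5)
Your fracture-square skeleton is the same as the paper's. However, the step that carries all the content, $\Z(i)^{\mathrm{mot}}(S)^\wedge_p\simeq\Z_p(i)^{\mathrm{syn}}(S)$, rests on a false claim: $\F_p(i)^{\mathrm{mot}}\to\F_p(i)^{\mathrm{syn}}$ is \emph{not} an equivalence on general $\F_p$-algebras.

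Geisser--Levine identifies $\Z(i)^{\mathbb{A}^1}/p$ on smooth schemes with the \emph{Zariski} cohomology $R\Gamma_{\mathrm{Zar}}(-,\Omega^i_{\log})[-i]$. By contrast, $\F_p(i)^{\mathrm{syn}}$ of a smooth scheme is the \emph{\'etale} cohomology $R\Gamma_{\text{\'et}}(-,\Omega^i_{\log})[-i]$. The difference is the sheaf $\widetilde{\nu}(i)=\mathrm{coker}(1-C^{-1}\colon\Omega^i\to\Omega^i/d\Omega^{i-1})$ placed in degree $i+1$. After cdh sheafification this is exactly \cite[Corollary~4.40]{elmanto_motivic_2023}: the cofibre of $\F_p(i)^{\mathrm{mot}}\to\F_p(i)^{\mathrm{syn}}$ is $R\Gamma_{\mathrm{cdh}}(-,\widetilde{\nu}(i))[-i-1]$, which need not vanish. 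For $F=\F_p(t)$ and $i=1$, one has $\F_p(1)^{\mathrm{mot}}(F)\simeq (F^\times/p)[-1]$, while $\mathrm{H}^2(\F_p(1)^{\mathrm{syn}}(F))\cong\mathrm{Br}(F)[p]\neq 0$.

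A telltale sign is that your load-bearing argument never uses semiperfectness, since Theorem~\ref{theoremmotiviccohomologyofperfectschemes}\,(1) holds for every $\F_p$-scheme. Yet the statement fails for $F=\F_p(t)$: $(\Q_p/\Z_p)(1)^{\mathrm{syn}}(F)$ has $\mathrm{H}^2\cong\mathrm{Br}(F)\{p\}\neq0$, while the cofibre of $\Z(1)^{\mathrm{mot}}(F)\to\Z(1)^{\mathrm{mot}}(F_{\mathrm{perf}})[\tfrac{1}{p}]$ is $(F^\times[\tfrac{1}{p}]/F^\times)[-1]$, concentrated in degree one.

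What is missing is a proof that $R\Gamma_{\mathrm{cdh}}(S,\widetilde{\nu}(i))=0$, and this is where the hypothesis enters. This presheaf is a \emph{finitary} cdh sheaf, and $\ker(S\to S_{\mathrm{perf}})=\bigcup_n\ker(\phi_S^n)$ is a nil ideal, so its value on $S$ agrees with its value on $S_{\mathrm{perf}}$. There it vanishes for $i\geq1$: $\F_p(i)^{\mathrm{mot}}(S_{\mathrm{perf}})=0$ by Theorem~\ref{theoremmotiviccohomologyofperfectschemes}\,(2), $\F_p(i)^{\mathrm{syn}}(S_{\mathrm{perf}})=0$ by the same Frobenius argument, and \cite[Corollary~4.40]{elmanto_motivic_2023} applies to $S_{\mathrm{perf}}$. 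Derived Nakayama then gives your identification, and the rest of your argument goes through.

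Your preliminary cartesian square is also false, so it is not a consistency check. The invariance argument breaks for the corner $L_{\mathrm{cdh}}\Z_p(i)^{\mathrm{syn}}$: it is the cdh sheafification of a non-finitary presheaf and is not finitary. Moreover, $\ker(S\to S_{\mathrm{perf}})$ is in general neither nilpotent nor finitely generated. So neither nilpotent invariance nor a Dundas--Goodwillie--McCarthy type square applies.

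Concretely, take $S=\F_p[x^{1/p^\infty}]/(x)$ with maximal ideal $\mathfrak{m}$, and $i=1$. Then $\Z(1)^{\mathrm{mot}}(S)\simeq S^\times[-1]$ lives in degree one. But $\Z_p(1)^{\mathrm{syn}}(S)\simeq(S^\times)^\wedge_p[-1]\simeq T_p(1+\mathfrak{m})[0]$ is nonzero in degree zero, since it contains the compatible system $(1+x^{1/p^n})_{n\geq1}$. This contradicts your splitting $\Z(1)^{\mathrm{mot}}(S)\simeq\Z(1)^{\mathrm{mot}}(S_{\mathrm{perf}})\oplus\Z_p(1)^{\mathrm{syn}}(S)$.

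More conceptually, that splitting together with Theorem~\ref{theoremmotiviccohomologyofperfectschemes} would force $\Q_p(i)^{\mathrm{syn}}(S)=0$. Here $\Q_p(1)^{\mathrm{syn}}(S)\neq0$, and this nonvanishing is exactly why $(\Q_p/\Z_p)(i)^{\mathrm{syn}}(S)$ appears in the statement. Drop this part of your argument.
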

	
	\begin{proof}
            There is a natural fracture square
            $$\begin{tikzcd}
			\Z(i)^{\text{mot}}(S) \ar[r] \ar[d] & \Z(i)^{\text{mot}}(S)[\tfrac{1}{p}] \ar[d] \\
			 \Z_p(i)^{\text{mot}}(S) \ar[r] & \Q_p(i)^{\text{mot}}(S)
		\end{tikzcd}$$
            in the derived category $\mathcal{D}(\Z)$. By Theorem~\ref{theoremmotiviccohomologyofperfectschemes},(1), the right upper corner is naturally identified with $\Z(i)^{\text{mot}}(S_{\text{perf}})[\tfrac{1}{p}]$, so it suffices to identify the bottom horizontal map with the canonical map
            $$\Z_p(i)^{\text{syn}}(S) \longrightarrow \Q_p(i)^{\text{syn}}(S).$$
            By construction and derived Nakayama, this is equivalent to proving that the natural map
            $$\F_p(i)^{\text{mot}}(S) \longrightarrow \F_p(i)^{\text{syn}}(S)$$
            is an equivalence in the derived category $\mathcal{D}(\F_p)$. By \cite[Corollary~$4.40$]{elmanto_motivic_2023} (see also \cite[Theorem~$5.10$]{bouis_motivic_2024} for a mixed characteristic generalisation), this is in turn equivalent to the fact that
		$$R\Gamma_{\text{cdh}}(S,\widetilde{\nu}(i))[-i-1] \simeq 0$$
		in the derived category $\mathcal{D}(\F_p)$. By definition, the Frobenius map $\phi_S : S \rightarrow S$ is surjective, and has nilpotent kernel. The presheaf $R\Gamma_{\text{cdh}}(-,\widetilde{\nu}(i))[-i-1]$ is a finitary cdh sheaf, so the natural map
		$$R\Gamma_{\text{cdh}}(S,\widetilde{\nu}(i))[-i-1] \longrightarrow R\Gamma_{\text{cdh}}(S_{\text{perf}},\widetilde{\nu}(i))[-i-1]$$
		is then an equivalence in the derived category $\mathcal{D}(\F_p)$. The target of this map is zero by Theorem~\ref{theoremmotiviccohomologyofperfectschemes}\,$(2)$ (where we use that $i \geq 1$, and the same argument for syntomic cohomology), and applying \cite[Corollary~$4.40$]{elmanto_motivic_2023} to the perfect $\F_p$-algebra $S_{\text{perf}}$.
	\end{proof}

	\section{Valuation rings}\label{Sectionvaluation}
	
	\vspace{-\parindent}
	\hspace{\parindent}
	
	Recall that a valuation ring is an integral domain $V$ such that for any elements $f$ and $g$ in $V$,
	either $f \in gV$ or $g \in fV$. In recent years, valuation rings have been used as a way to bypass resolution of singularities, in order to adapt arguments from characteristic zero to more general contexts \cite{kerz_towards_2021,kelly_k-theory_2021,bouis_cartier_2023,bachmann_A^1-invariant_2025}. In this section, we describe the motivic cohomology of valuation rings (Theorems~\ref{theoremvaluationringslisse=mot} and~\ref{theoremvaluationringsmotiviccohomologyfinitecoefficients}). We start with the following result, stating that the motivic complexes~$\Z(i)^{\text{mot}}$, on henselian valuation rings, have a description purely in terms of algebraic cycles. See \cite[Section~9]{elmanto_motivic_2023} for related results over a field.
	
	\begin{theorem}\label{theoremvaluationringslisse=mot}
		Let $V$ be a henselian valuation ring. Then for every integer $i \geq 0$, the motivic complex $\Z(i)^{\emph{mot}}(V) \in \mathcal{D}(\Z)$ is in degrees at most $i$, and the lisse-motivic comparison map \cite[Definition~$2.1$]{bouis_weibel_2025}
		$$\Z(i)^{\emph{lisse}}(V) \longrightarrow \Z(i)^{\emph{mot}}(V)$$
		is an equivalence in the derived category $\mathcal{D}(\Z)$.
	\end{theorem}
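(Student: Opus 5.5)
Here is the plan: reduce the comparison to two inputs, namely the pullback square defining $\Z(i)^{\text{mot}}$ and the cdh-local nature of henselian valuation rings, using the degree bound as the key intermediate statement.

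\textbf{Step 1: the comparison square.} By \cite{bouis_weibel_2025}, the lisse-motivic comparison map sits in a cartesian-type square. The fibre of $\Z(i)^{\text{lisse}} \to \Z(i)^{\text{mot}}$ should be identified with the fibre of the analogous map from the left Kan extension of $\Z(i)^{\text{TC}}$ (resp.\ of syntomic cohomology and the $\widehat{\mathbb{L}\Omega}^{\geq i}_{-_{\Q}/\Q}$ term) from smooth $\Z$-algebras to its cdh sheafification. This is the cdh/lisse square underlying the definition in \cite{bouis_motivic_2024}.

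\textbf{Step 2: henselian valuation rings are cdh points.} Henselian valuation rings are local for the cdh topology (this is the standard input used to compute cdh stalks). So for any finitary presheaf $F$, the map $F(V) \to (L_{\text{cdh}}F)(V)$ is an equivalence. It therefore suffices to know that the presheaves involved are finitary, and then to compare the left Kan extended theory with the cdh-local one. Concretely, I will reduce to showing that the natural map
$$\Z(i)^{\text{lisse}}(V) \longrightarrow L_{\text{cdh}}\Z(i)^{\text{lisse}}(V)$$
is an equivalence. I will also show that $L_{\text{cdh}}\Z(i)^{\text{lisse}}$ agrees with $\Z(i)^{\text{mot}}$ on $V$. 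The latter follows from the pullback square, since the TC-term and its cdh-sheafified counterpart coincide on a cdh point.

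\textbf{Step 3: the degree bound.} For the statement that $\Z(i)^{\text{mot}}(V)$ is in degrees $\leq i$, I will use the comparison with $\Z(i)^{\text{lisse}}(V)$. By Gabber--Ramero or Temkin-style results, $V$ is a filtered colimit of smooth $\Z$-algebras, at least after henselisation of valuation rings of essentially finite type. I would invoke the known fact that valuation rings are ind-smooth over $\Z$ in the relevant sense (in residue characteristic zero, or more generally via the \emph{Elmanto--Morrow/Kelly--Saito} arguments). Since $\Z(i)^{\text{lisse}}$ is left Kan extended from smooth algebras, it commutes with filtered colimits. The classical motivic cohomology of local smooth schemes satisfies the Beilinson--Soulé-type bound: it is in degrees $\leq i$ by the Gersten conjecture over fields and Dedekind domains. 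That bound then passes to $V$.

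\textbf{Main obstacle.} Where ind-smoothness is unavailable (mixed characteristic, non-noetherian $V$), I would instead argue $p$-adically and rationally separately. Rationally, both sides are computed by derived de Rham/cdh-de Rham complexes, which agree on valuation rings because $\mathbb{L}_{V/\Z}$ is flat (Gabber--Ramero). $p$-adically, I would use the Beilinson--Lichtenbaum comparison of \cite{bouis_beilinson-lichtenbaum_2025} to reduce to étale cohomology of $V[1/p]$ in degrees $\leq i$, together with syntomic cohomology of valuation rings being in degrees $\leq i$ via the flatness of the cotangent complex. Matching these two fracture pieces compatibly is the delicate step.
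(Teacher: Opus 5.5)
Your Steps 1--2 handle the second statement correctly. The paper simply imports this from the proof of \cite[Lemma~3.25]{bouis_weibel_2025}. On the cdh-local ring $V$, the TC-term of the defining pullback square agrees with its cdh sheafification, so $\Z(i)^{\text{mot}}(V)$ is the value at $V$ of the cdh sheafification of the finitary presheaf $\Z(i)^{\text{lisse}}$, which is $\Z(i)^{\text{lisse}}(V)$. Two caveats, neither fatal. First, $\Z(i)^{\text{TC}}$ is not finitary on the nose, because of the $p$-adic and Hodge completions, so that agreement is a genuine input rather than a formal consequence of finitariness. Second, your Step 1 description of the fibre is inaccurate, though you never use it.

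\textbf{The gap is in Step 3.} Ind-smoothness over $\Z$ holds when $\Q\subseteq V$ (Zariski), but it is false in general. If $V=\operatorname{colim}_\alpha P_\alpha$ with each $P_\alpha$ smooth over $\Z$, then $V/p=\operatorname{colim}_\alpha P_\alpha/p$ is reduced. This fails for every ramified henselian discrete valuation ring and for $\mathcal{O}_{\C_p}$. In characteristic $p$, ind-smoothness would amount to a form of local uniformisation, which is open.

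Your fallback does not repair this. Degree bounds for $\Q(i)^{\text{mot}}(V)$ and for every $\F_p(i)^{\text{mot}}(V)$ only force $\text{H}^j_{\text{mot}}(V,\Z(i))=0$ for $j\geq i+2$, and force $\text{H}^{i+1}_{\text{mot}}(V,\Z(i))$ to be a divisible torsion group. Indeed, $(\Q/\Z)[-i-1]$ satisfies both bounds, so some integral input is unavoidable. Moreover, syntomic cohomology of an arbitrary henselian valuation ring is not known to lie in degrees $\leq i$. Flatness of the cotangent complex does not give this; it is known in characteristic $p$ and over a perfectoid base, and in general it is tied to the conjectural $F$-smoothness of valuation rings. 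You do not need it anyway, since $\Z/p^k(i)^{\text{mot}}(V)\simeq\tau^{\leq i}\Z/p^k(i)^{\text{syn}}(V)$ regardless.

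\textbf{The missing idea is that locality of $V$ suffices.} The paper obtains the bound on the lisse side from \cite[Corollary~4.4]{geisser_motivic_2004}, as in \cite[Corollary~2.12]{bouis_weibel_2025}. That result says classical motivic cohomology of local rings of smooth schemes over a Dedekind domain lies in degrees $\leq i$. The paper then transports the bound along the equivalence $\Z(i)^{\text{lisse}}(V)\simeq\Z(i)^{\text{mot}}(V)$.

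One way to see the lisse bound is as follows. Let $A$ be a local ring with maximal ideal $\mathfrak{m}$. The left Kan extension $\Z(i)^{\text{lisse}}(A)=\operatorname{colim}_{(P\to A)}\Z(i)^{\text{cla}}(P)$ is indexed by smooth $\Z$-algebras $P$ with a map to $A$. By a cofinality argument it can be rewritten as $\operatorname{colim}_{(P\to A)}\Z(i)^{\text{lisse}}(P_{\mathfrak{p}})$, where $\mathfrak{p}\subseteq P$ is the preimage of $\mathfrak{m}$. This works because $P\to A$ factors through $P[1/s]$ for every $s\notin\mathfrak{p}$. Each $\Z(i)^{\text{lisse}}(P_{\mathfrak{p}})$ is a Zariski stalk of classical motivic cohomology of a smooth $\Z$-scheme, hence lies in degrees $\leq i$ by Geisser. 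Complexes concentrated in degrees $\leq i$ are closed under colimits in $\mathcal{D}(\Z)$, so the bound passes to $A$. No ind-smoothness and no fracture-square analysis are needed.
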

	
	\begin{proof}
		The second statement already appears in the proof of \cite[Lemma~$3.25$]{bouis_weibel_2025}. As in \cite[Lemma~$3.25$]{bouis_weibel_2025} or \cite[Corollary~$2.12$]{bouis_weibel_2025}, the first statement is then a consequence of \cite[Corollary~$4.4$]{geisser_motivic_2004}.
	\end{proof}
	
	\begin{example}
		Let $V$ be a henselian valuation ring. By \cite[Example~$4.68$]{bouis_motivic_2024}, there is a natural equivalence
		$$\Z(0)^{\text{mot}}(V) \simeq \Z[0]$$
		in the derived category $\mathcal{D}(\Z)$. Similarly, Theorem~\ref{theoremvaluationringslisse=mot}, \cite[Example~$3.9$]{bouis_motivic_2024}, and the fact that the Picard group of a local ring is zero, imply that the motivic complex $\Z(1)^{\text{mot}}(V) \in \mathcal{D}(\Z)$ is concentrated in degree one, where it is given by
		$$\text{H}^1_{\text{mot}}(V,\Z(1)) \cong V^{\times}.$$
	\end{example}

	We now apply the results of the previous sections to give an alternative description of the motivic cohomology of valuation rings with finite coefficients. The following proposition will be used to reformulate the results of \cite{bouis_cartier_2023} on syntomic cohomology in terms of motivic cohomology.
	
	\begin{proposition}\label{propositionmotiviccohomologyvaluationringsmodpisindegreesatmosti}
		Let $p$ be a prime number, and $V$ be a henselian valuation ring. Then for any integers $i \geq 0$ and $k \geq 1$, there is a natural equivalence
		$$\Z/p^k(i)^{\emph{mot}}(V) \xlongrightarrow{\sim} \tau^{\leq i} \Z/p^k(i)^{\emph{syn}}(V)$$
		in the derived category $\mathcal{D}(\Z/p^k)$.
	\end{proposition}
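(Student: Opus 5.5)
We argue by cases on the characteristic behaviour of $p$ on $V$, using the Beilinson--Lichtenbaum type comparison of \cite[Corollary~$4.40$]{elmanto_motivic_2023} and its mixed characteristic version \cite[Theorem~$5.10$]{bouis_motivic_2024}. These give a natural fibre sequence
$$\Z/p^k(i)^{\text{mot}}(V) \longrightarrow \Z/p^k(i)^{\text{syn}}(V) \longrightarrow R\Gamma_{\text{cdh}}(V,\widetilde{\nu}_k(i))[-i-1],$$
where $\widetilde{\nu}_k(i)$ is the appropriate cdh-sheafified cokernel term, and the case $k=1$ reduces the general case by dévissage along $0\to \Z/p \to \Z/p^k \to \Z/p^{k-1}\to 0$. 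Here $\Z/p^k(i)^{\text{syn}}(V)$ means the syntomic (or étale, when $p$ is invertible) cohomology appearing in \cite{bouis_motivic_2024}. If $p$ is invertible on $V$, the statement is the Beilinson--Lichtenbaum conjecture: by Theorem~\ref{theoremvaluationringslisse=mot} the motivic complex is the lisse one, and for henselian local rings of this kind the classical Beilinson--Lichtenbaum comparison $\Z/p^k(i)^{\text{lisse}} \simeq \tau^{\leq i} R\Gamma_{\text{ét}}(-,\mu_{p^k}^{\otimes i})$ holds (via Gabber--Suslin rigidity and \cite{geisser_motivic_2004}).

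In the case where $p$ is not invertible, we first check that the third term of the fibre sequence is concentrated in degrees $\geq i+1$. A henselian valuation ring is local for the cdh topology (or at least its cdh cohomology with coefficients in such sheaves reduces to global sections, since valuation rings are the points of the cdh/rh topology). Thus $R\Gamma_{\text{cdh}}(V,\widetilde{\nu}_k(i))$ is just $\widetilde{\nu}_k(i)(V)$ in degree $0$, and the shifted term sits in degree exactly $i+1$. Taking cohomology of the fibre sequence then gives $\text{H}^j_{\text{mot}} \cong \text{H}^j_{\text{syn}}$ for $j<i$, together with an injection for $j=i$ and a surjection for $j=i+1$. By Theorem~\ref{theoremvaluationringslisse=mot}, $\Z(i)^{\text{mot}}(V)$ lies in degrees $\leq i$, so $\Z/p^k(i)^{\text{mot}}(V)$ lies in degrees $\leq i+1$. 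More precisely, $\text{H}^{i+1}$ is $\text{H}^i_{\text{mot}}(V,\Z(i))[p^k]$-free and equal to $\text{H}^{i+1}_{\text{mot}}(V,\Z(i))/p^k = 0$, so it lies in degrees $\leq i$. The map therefore factors through $\tau^{\leq i}\Z/p^k(i)^{\text{syn}}(V)$ and is an isomorphism in degrees $<i$.

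The main obstacle is surjectivity in degree $i$, i.e.\ that $\text{H}^i_{\text{syn}}(V,\Z/p^k(i))\to \widetilde{\nu}_k(i)(V)$ vanishes. For this I would use \cite{bouis_cartier_2023}: for valuation rings, $\text{H}^i_{\text{syn}}(V,\Z/p(i))$ is generated by symbols $\text{dlog}[u_1]\wedge\dots\wedge\text{dlog}[u_i]$ with $u_j \in V^\times$ (a Bloch--Kato--Gabber type statement). Since symbols come from $\text{H}^1_{\text{mot}}(V,\Z(1))^{\otimes i}\cong (V^\times)^{\otimes i}$ via the product, they lift to motivic cohomology, and hence map to zero in the cofibre. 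The dévissage in $k$ is then routine by the five lemma, and naturality is clear from the construction.
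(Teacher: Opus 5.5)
Your strategy is the paper's: evaluate the Beilinson--Lichtenbaum fibre sequence of \cite[Theorem~5.10]{bouis_motivic_2024} (or \cite[Corollary~4.40]{elmanto_motivic_2023}) on the cdh-local ring $V$. The paper's proof is exactly that observation. However, you misread the long exact sequence, and this creates a fake obstacle. If the cofibre is $\widetilde{\nu}_k(i)(V)[-i-1]$, it is concentrated in degree $i+1$. The long exact sequence therefore gives $\mathrm{H}^j_{\mathrm{mot}}(V,\Z/p^k(i))\cong \mathrm{H}^j_{\mathrm{syn}}(V,\Z/p^k(i))$ for all $j\leq i$, \emph{including} $j=i$, plus an injection in degree $i+1$. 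The map $\mathrm{H}^i_{\mathrm{syn}}(V,\Z/p^k(i))\to \mathrm{H}^i(\widetilde{\nu}_k(i)(V)[-i-1])=0$ is automatically zero. You correctly observe that $\Z/p^k(i)^{\mathrm{mot}}(V)$ lies in degrees $\leq i$, by Theorem~\ref{theoremvaluationringslisse=mot} and the universal coefficient sequence (its torsion term in degree $i+1$ is $\mathrm{H}^{i+2}_{\mathrm{mot}}(V,\Z(i))[p^k]$, not anything involving $\mathrm{H}^i$). Combined with the corrected long exact sequence, this already shows that $\Z/p^k(i)^{\mathrm{mot}}(V)\to\tau^{\leq i}\Z/p^k(i)^{\mathrm{syn}}(V)$ is an equivalence. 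Neither the symbol argument nor the d\'evissage in $k$ is needed.

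As written, though, your proof rests on the symbol step, and that step is not justified. First, \cite{bouis_cartier_2023} does not give symbol generation of $\mathrm{H}^i_{\mathrm{syn}}(V,\Z/p(i))$ for an arbitrary henselian valuation ring of mixed characteristic. The relevant results there require a perfectoid base, and the general case is tied to the conjectural $F$-smoothness of valuation rings. Second, in the paper, symbol generation (Theorem~\ref{theoremvaluationringsmotiviccohomologyfinitecoefficients}) is deduced \emph{from} this proposition together with $\widehat{\mathrm{K}}{}^{\mathrm{M}}_i(V)/p^k\cong\mathrm{H}^i_{\mathrm{mot}}(V,\Z/p^k(i))$, so invoking it here is circular.

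Your separate treatment of the case $p\in V^\times$ is also only a sketch. $\Z(i)^{\mathrm{lisse}}$ is a left Kan extension from smooth $\Z$-algebras, and $V$ is not known to be ind-smooth when its residue field has positive characteristic. You do not explain how classical Beilinson--Lichtenbaum plus rigidity transfers to $\Z/p^k(i)^{\mathrm{lisse}}(V)$. This case split is unnecessary anyway. \cite[Theorem~5.10]{bouis_motivic_2024} applies to all qcqs schemes, with a correction term of cdh nature, so on the cdh-local ring $V$ it yields the truncation directly whether or not $p$ is invertible. Note that in the $p$-invertible case a correction term of the form $\widetilde{\nu}_k(i)(V)[-i-1]$, concentrated in one degree, is not available, since the étale cohomology of the residue field can be nonzero in degrees above $i+1$.
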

	
	\begin{proof}
		Henselian valuation rings are local rings for the cdh topology, so this is a consequence of \cite[Theorem~$5.10$]{bouis_motivic_2024}.
	\end{proof}
	
	The following result is an analogue for valuation rings of Geisser--Levine's description of motivic cohomology of smooth $\F_p$-algebras \cite{geisser_k-theory_2000}. It can be deduced from the results of Kelly--Morrow \cite{kelly_k-theory_2021} and Elmanto--Morrow \cite{elmanto_motivic_2023}.
	
	\begin{theorem}\label{theoremmotiviccohomologyofcharpvaluationrings}
		Let $p$ be a prime number, and $V$ be a henselian valuation ring of characteristic $p$. Then for any integers $i \geq 0$ and $k \geq 1$, there is a natural equivalence
		$$\Z/p^k(i)^{\emph{mot}}(V) \xlongrightarrow{\sim} W_k\Omega^i_{V,\emph{log}}[-i]$$
		in the derived category $\mathcal{D}(\Z/p^k)$.
	\end{theorem}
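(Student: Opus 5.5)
By Proposition~\ref{propositionmotiviccohomologyvaluationringsmodpisindegreesatmosti}, there is a natural equivalence
$$\Z/p^k(i)^{\text{mot}}(V) \xlongrightarrow{\sim} \tau^{\leq i}\Z/p^k(i)^{\text{syn}}(V),$$
so the statement reduces to a computation of the syntomic cohomology of $V$ with $\Z/p^k$-coefficients. In characteristic $p$, Bhatt--Morrow--Scholze syntomic cohomology has a description in terms of the de Rham--Witt complex. Namely, $\Z/p^k(i)^{\text{syn}}(V)$ is the fibre of $1-F$ (or rather of $R-F$) on the Nygaard-type filtered piece of $W_k\Omega^{\geq i}$, computed on the cotangent-complex/derived de Rham--Witt level. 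The plan is to show that this complex, for a valuation ring $V$ of characteristic $p$, is already concentrated in degree $i$ and equal to $W_k\Omega^i_{V,\text{log}}[-i]$.

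\textbf{Step 1: cohomological degrees at least $i$.} The key input is Gabber--Ramero's result that the cotangent complex $\mathbb{L}_{V/\F_p}$ is a flat $V$-module in degree $0$, so that $\Omega^j_{V/\F_p}$ is flat and $\mathbb{L}\Omega^j = \Omega^j$. Consequently $V$ behaves like a smooth $\F_p$-algebra (it is a filtered colimit of smooth algebras, by Temkin's local uniformisation in char $p$ or by the Cartier-smooth results of \cite{kelly_k-theory_2021,bouis_cartier_2023}). More precisely, $V$ is Cartier smooth: the inverse Cartier map is an isomorphism and the $\Omega^j$ are flat. For Cartier smooth $\F_p$-algebras, Kelly--Morrow and \cite{bouis_cartier_2023} prove the Geisser--Levine type statement
$$\Z/p^k(i)^{\text{syn}}(V) \simeq R\Gamma_{\text{ét}}(V, W_k\Omega^i_{\text{log}})[-i].$$
I would invoke this result, whose proof is the main technical step. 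The proof runs through the short exact sequence $0 \to W_k\Omega^i_{\text{log}} \to W_k\Omega^i \xrightarrow{1-F} W_k\Omega^i/dV^{k-1} \to 0$ on the étale site and the vanishing of the other Nygaard pieces.

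\textbf{Step 2: truncation.} Since $V$ is henselian local, the étale cohomology $R\Gamma_{\text{ét}}(V,W_k\Omega^i_{\text{log}})$ has $H^0 = W_k\Omega^i_{V,\text{log}}$ and $H^1 = \operatorname{coker}(1-F)$ on $W_k\Omega^i_V$. There are no higher groups, because $p$-cohomological dimension in characteristic $p$ is at most $1$. After the shift $[-i]$ these sit in degrees $i$ and $i+1$, so applying $\tau^{\leq i}$ kills the $H^1$ term and leaves $W_k\Omega^i_{V,\text{log}}[-i]$. Naturality follows from that of all the maps involved.

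\textbf{Main obstacle.} I expect the main difficulty to be Step 1: the identification of syntomic cohomology of the non-noetherian ring $V$ with log de Rham--Witt cohomology. This identification depends on Cartier smoothness of valuation rings, which in turn rests on Gabber--Ramero and Gabber's theorem on $\Omega^1_V$ being torsion-free. If a direct citation is unavailable, I would reduce to the case $k=1$ by d\'evissage along $0\to W_{k-1}\Omega^i_{\text{log}}\xrightarrow{p} W_k\Omega^i_{\text{log}}\to \Omega^i_{\text{log}}\to0$, which is exact for Cartier smooth algebras. The case $k=1$ then follows from $\F_p(i)^{\text{syn}}(V)\simeq \operatorname{fib}(\Omega^i_V \xrightarrow{C^{-1}-1} \Omega^i_V/d\Omega^{i-1}_V)[-i]$, whose kernel is $\Omega^i_{V,\text{log}}$ by the Cartier isomorphism.
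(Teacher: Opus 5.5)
Your proposal is essentially the paper's proof: the paper combines Proposition~\ref{propositionmotiviccohomologyvaluationringsmodpisindegreesatmosti} with Cartier smoothness of $V$ (Gabber--Ramero and Gabber, \cite[Theorem~3.4]{bouis_cartier_2023}), and cites \cite[Proposition~5.1\,(ii)]{luders_milnor_2023} for the syntomic computation that your Steps~1 and~2 unpack. One caveat: identifying $\mathrm{H}^0$ of the \'etale sheaf $W_k\Omega^i_{\log}$ on $V$ (i.e., the kernel of $1-F$ on $W_k\Omega^i_V$) with the subgroup generated by $d\log$ forms is a Bloch--Kato--Gabber-type theorem for local Cartier smooth rings, which that citation supplies; it does not follow from henselianity or from the Cartier isomorphism alone. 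Separately, your parenthetical claim that $V$ is ind-smooth by Temkin is not known in general (Temkin's result only gives this after perfection), though your argument never uses it.
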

	
	\begin{proof}
		Valuation rings of characteristic $p$ are Cartier smooth over $\F_p$ by results of Gabber--Ramero and Gabber \cite[Theorem~$3.4$]{bouis_cartier_2023}, so this is a consequence of \cite[Proposition~$5.1\,(ii)$]{luders_milnor_2023} and Proposition~\ref{propositionmotiviccohomologyvaluationringsmodpisindegreesatmosti}.
	\end{proof}
	
	We then prove a mixed characteristic version of Theorem~\ref{theoremmotiviccohomologyofcharpvaluationrings}, starting with the following $\ell$-adic general result.
	
	\begin{proposition}\label{propositionladicmotiviccohomologyofvaluationrings}
		Let $p$ be a prime number, and $V$ be a henselian valuation ring such that $p$ is invertible in $V$. Then for any integers $i \geq 0$ and $k \geq 1$, the Beilinson--Lichtenbaum comparison map \cite[Definition~$5.6$]{bouis_motivic_2024} naturally factors through an equivalence
		$$\Z/p^k(i)^{\emph{mot}}(V) \xlongrightarrow{\sim} \tau^{\leq i} R\Gamma_{\emph{ét}}(\emph{Spec}(V),\mu_{p^k}^{\otimes i})$$
		in the derived category $\mathcal{D}(\Z/p^k)$.
	\end{proposition}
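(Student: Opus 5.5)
The plan is to reduce to the classical Beilinson--Lichtenbaum conjecture, now the Rost--Voevodsky theorem, using Theorem~\ref{theoremvaluationringslisse=mot}. Since $p$ is invertible in $V$, the mod $p^k$ motivic complexes are $\mathbb{A}^1$-invariant and agree with their cdh-local (classical) counterparts. More precisely, by the construction of $\Z(i)^{\text{mot}}$ in \cite{bouis_motivic_2024}, after reduction mod $p^k$ with $p$ invertible, the pullback square defining $\Z(i)^{\text{mot}}$ has $p$-adically trivial syntomic/TC corners. In addition, the cdh-local term is the cdh sheafification of the $\mathbb{A}^1$-invariant motivic cohomology. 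Because henselian valuation rings are local for the cdh topology, this gives
$$\Z/p^k(i)^{\text{mot}}(V) \simeq \Z/p^k(i)^{\mathbb{A}^1}(V).$$
Alternatively, one can use Theorem~\ref{theoremvaluationringslisse=mot} directly: the lisse motivic cohomology $\Z(i)^{\text{lisse}}(V)$ is the left Kan extension of classical motivic cohomology from smooth $\Z$-schemes. Writing $V$ as a filtered colimit of smooth $\Z[\tfrac1p]$-algebras is not possible in general, but $V$ is ind-smooth over its prime subring in equicharacteristic (by Zariski uniformisation in characteristic zero, and by Gabber/Temkin-type results in the relevant cases). In mixed characteristic one can instead use that $V$ is a filtered colimit of henselian valuation rings of finite rank. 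I would rather avoid this and use the cdh-local description.

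The key steps are as follows. \emph{Step 1:} identify $\Z/p^k(i)^{\text{mot}}(V)$ with $\Z/p^k(i)^{\text{lisse}}(V)$ via Theorem~\ref{theoremvaluationringslisse=mot}, and note that the Beilinson--Lichtenbaum comparison map of \cite[Definition~5.6]{bouis_motivic_2024} restricts on the lisse part to the classical comparison map $\Z/p^k(i)^{\text{cla}} \to R\Gamma_{\text{ét}}(-,\mu_{p^k}^{\otimes i})$. Both functors are left Kan extended, or finitary, so the compatibility can be checked on smooth $\Z[\tfrac1p]$-algebras. \emph{Step 2:} on smooth schemes over $\Z[\tfrac1p]$ (or over a Dedekind domain where $p$ is invertible), the Rost--Voevodsky theorem together with Geisser's extension \cite{geisser_motivic_2004} gives
$$\Z/p^k(i)^{\text{cla}} \simeq \tau^{\leq i} R\Gamma_{\text{ét}}(-,\mu_{p^k}^{\otimes i})$$
Zariski locally, hence on local rings. \emph{Step 3:} pass to $V$. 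The motivic side is in degrees $\leq i$ by Theorem~\ref{theoremvaluationringslisse=mot}, so the comparison map factors through $\tau^{\leq i}$. To prove that the factored map is an equivalence, I would use the Gersten-type statement underlying \cite[Corollary~4.4]{geisser_motivic_2004} for valuation rings. Concretely, I would write $V$ as a filtered colimit of henselian local essentially smooth algebras, or use rigidity: by Gabber's affine analogue of proper base change, the étale cohomology of the henselian $V$ agrees with that of its residue field $\kappa$. On the motivic side, rigidity for mod $p^k$ lisse motivic cohomology of henselian pairs (Gabber--Suslin rigidity, valid since $p$ is invertible) likewise reduces to $\kappa$. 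For the field $\kappa$, the statement is the Beilinson--Lichtenbaum theorem itself.

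I expect the main obstacle to be Step 3, namely justifying rigidity for $\Z/p^k(i)^{\text{mot}}$ along $V \to \kappa$. I would first try to deduce it from the fact that $\Z/p^k(i)^{\text{mot}}$ with $p$ invertible is the cdh sheafification of an $\mathbb{A}^1$-invariant theory, combined with rigidity results for $\mathbb{A}^1$-invariant, finitary, mod $p^k$ sheaves with transfers on henselian pairs (Gabber, Suslin, Röndigs--Østvær). Failing that, I would fall back on Geisser's Gersten resolution for the valuation ring viewed as a limit of smooth local rings over a Dedekind base.
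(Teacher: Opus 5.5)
There is a genuine gap, and it sits exactly in Step~3: the passage from smooth local rings (or fields) to an arbitrary henselian valuation ring $V$ with $p\in V^\times$. Steps~1--2 cannot make this passage on their own.

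\begin{itemize}
\item \emph{Steps 1--2 do not reach $V$.} The functor $R\Gamma_{\text{\'et}}(-,\mu_{p^k}^{\otimes i})$ is not left Kan extended from smooth algebras. So compatibility with the classical comparison map on smooth $\Z[\tfrac1p]$-algebras says nothing about the map at $V$. The exception is when $V$ is a filtered colimit of henselian local essentially smooth algebras; then finitariness of both sides would conclude.
\item \emph{The colimit/Gersten fallback fails in general.} Writing $V$ as such a colimit is known only when $V$ has characteristic zero. Your assertion that Gabber/Temkin-type results give ind-smoothness in positive characteristic is not correct: Gabber gives Cartier smoothness, and Temkin gives only inseparable local uniformisation. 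Nothing of this kind is available in mixed characteristic $(0,\ell)$ with $\ell\neq p$.
\item \emph{The rigidity route is unsupported.} It needs $\Z/p^k(i)^{\mathrm{mot}}(V)\simeq\Z/p^k(i)^{\mathrm{mot}}(\kappa)$. The rigidity theorems for homotopy invariant presheaves with transfers (Suslin--Voevodsky, R\"ondigs--\O stv\ae r) concern henselisations of smooth schemes over a field. They say nothing about, e.g., $\mathcal{O}_{\C_\ell}$ with $\ell\neq p$. Gabber's rigidity for $\text{K}(-;\Z/p^k)$ does not formally descend to the graded pieces of the motivic filtration.
\item \emph{This step is circular as it stands.} Given \'etale rigidity and Beilinson--Lichtenbaum for fields, motivic rigidity at $V$ is equivalent to the statement you are trying to prove.
\item Your first reduction to $\Z/p^k(i)^{\mathbb{A}^1}(V)$ has the same problem; it only relocates it.
\end{itemize}

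The missing idea is to use that henselian valuation rings are local rings for the cdh topology. Combine this with the general comparison theorems of \cite{bouis_motivic_2024}, where cdh descent already absorbs the non-smooth input. The paper's argument has two steps:
\begin{itemize}
\item By Proposition~\ref{propositionmotiviccohomologyvaluationringsmodpisindegreesatmosti} (a consequence of \cite[Theorem~5.10]{bouis_motivic_2024} and cdh-locality), $\Z/p^k(i)^{\mathrm{mot}}(V)$ lies in degrees at most $i$. Your degree bound via Theorem~\ref{theoremvaluationringslisse=mot} is also fine.
\item Then \cite[Corollary~5.6]{bouis_motivic_2024}, the Beilinson--Lichtenbaum comparison for the non-$\mathbb{A}^1$-invariant theory, gives the equivalence.
\end{itemize}
Equivalently, since $p\in V^\times$, the syntomic complex $\Z/p^k(i)^{\mathrm{syn}}(V)$ (defined \`a la Bhatt--Lurie for non-$p$-complete rings) is just $R\Gamma_{\text{\'et}}(\mathrm{Spec}(V),\mu_{p^k}^{\otimes i})$. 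So that Proposition is already essentially the statement, and no rigidity or local uniformisation is needed.
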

	
	\begin{proof}
		By Proposition~\ref{propositionmotiviccohomologyvaluationringsmodpisindegreesatmosti}, the motivic complex $\Z/p^k(i)^{\text{mot}}(V) \in \mathcal{D}(\Z/p^k)$ is in degrees at most $i$, so the result is a consequence of \cite[Corollary~$5.6$]{bouis_motivic_2024}.
	\end{proof}
	
	The following result generalises Proposition~\ref{propositionladicmotiviccohomologyofvaluationrings} when $p$ is not necessarily invertible in the valuation ring $V$, at least over a perfectoid base. 
	
	\begin{theorem}[Motivic cohomology of valuation rings with finite coefficients]\label{theoremvaluationringsmotiviccohomologyfinitecoefficients}
		Let $p$ be a prime number, $V_0$ be a $p$-torsionfree valuation ring whose $p$\nobreakdash-completion is a perfectoid ring, and $V$ be a henselian valuation ring extension of $V_0$. Then for any integers $i \geq 0$ and $k \geq 1$, the Beilinson--Lichtenbaum comparison map \cite[Definition~$5.3$]{bouis_motivic_2024} induces a natural map
		$$\Z/p^k(i)^{\emph{mot}}(V) \longrightarrow \tau^{\leq i} R\Gamma_{\emph{ét}}(\emph{Spec}(V[\tfrac{1}{p}]),\mu_{p^k}^{\otimes i})$$
		in the derived category $\mathcal{D}(\Z/p^k)$, which is an isomorphism in degrees less than or equal to $i-1$. On~$\emph{H}^i$, this map is injective, with image generated by symbols, via the symbol map $$(V^\times)^{\otimes i} \rightarrow \emph{H}^i_{\emph{ét}}(\emph{Spec}(V[\tfrac{1}{p}]),\mu_{p^k}^{\otimes i}).$$
	\end{theorem}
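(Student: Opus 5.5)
We will reduce the statement to a known result on the syntomic cohomology of valuation rings, using Proposition~\ref{propositionmotiviccohomologyvaluationringsmodpisindegreesatmosti}. That proposition gives a natural equivalence $\Z/p^k(i)^{\text{mot}}(V) \simeq \tau^{\leq i}\Z/p^k(i)^{\text{syn}}(V)$. The Beilinson--Lichtenbaum comparison map of \cite[Definition~$5.3$]{bouis_motivic_2024} is compatible with this equivalence. It factors through the syntomic-to-étale comparison map
$$\Z/p^k(i)^{\text{syn}}(V) \longrightarrow R\Gamma_{\text{ét}}(\text{Spec}(V[\tfrac{1}{p}]),\mu_{p^k}^{\otimes i})$$
of Bhatt--Morrow--Scholze and Bhatt--Lurie, obtained by inverting the Breuil--Kisin twist, or equivalently by étale comparison for the generic fibre. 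Applying $\tau^{\leq i}$ produces the map in the statement. It therefore suffices to show that this syntomic-to-étale map is an isomorphism on $\text{H}^j$ for $j \leq i-1$, and injective on $\text{H}^i$ with image generated by symbols.

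For the range $j \leq i-1$ we use the results of \cite{bouis_cartier_2023} on valuation rings over a perfectoid base. Since $V$ is a henselian valuation ring extension of $V_0$, whose $p$-completion is perfectoid, the $p$-completion of $V$ is a $p$-completely Cartier smooth algebra over a perfectoid ring. For such rings \cite{bouis_cartier_2023} proves a Beilinson--Lichtenbaum type statement: the map $\Z/p^k(i)^{\text{syn}}(V) \to R\Gamma_{\text{ét}}(\text{Spec}(V[\tfrac1p]),\mu_{p^k}^{\otimes i})$ identifies the source with $\tau^{\leq i}$ of the target in degrees $\leq i-1$. We will also need two reduction steps.
\begin{enumerate}
\item Syntomic cohomology with finite coefficients only depends on the $p$-completion.
\item Étale cohomology of $V[\tfrac1p]$ agrees with that of the generic fibre of $V^\wedge_p$, by Fujiwara--Gabber affine analogue of proper base change for the henselian pair $(V,pV)$ (or $(V,\sqrt{pV})$).
\end{enumerate}
The first point is stable under reduction modulo $p^k$, so we can reduce to $k=1$ by dévissage on the Bockstein sequences. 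The exception is the degree-$i$ statement, which requires care because injectivity is not preserved by the five lemma alone. For this we use that $\text{H}^{i+1}$ of the motivic side vanishes, being truncated, together with the exactness of $0\to \Z/p \to \Z/p^{k}\to\Z/p^{k-1}\to 0$.

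For $\text{H}^i$, we use the description of $\text{H}^i\Z/p^k(i)^{\text{syn}}$ of a $p$-torsionfree Cartier smooth valuation ring as generated by symbols. That is, $\Z/p^k(i)^{\text{syn}}$ in top degree is the image of $(V^\times)^{\otimes i}$ via the syntomic first Chern class $V^\times\to \text{H}^1\Z_p(1)^{\text{syn}}$ and multiplicativity. This is the mixed-characteristic analogue of Theorem~\ref{theoremmotiviccohomologyofcharpvaluationrings} and should follow from \cite{bouis_cartier_2023}, or from the Bloch--Kato--Gabber type symbol surjectivity proved there via Nygaard filtration and log de Rham--Witt. Compatibility of the comparison map with symbols, which holds because both sides are multiplicative and agree in weight one (Kummer theory), then identifies the image with the subgroup generated by étale symbols.

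We expect the main obstacle to be injectivity on $\text{H}^i$. The approach is to identify the fibre of the syntomic-to-étale map with the $(i+1)$-st graded step of a filtration that lives in degrees $\geq i$. This is the statement in \cite{bouis_cartier_2023} that the cofibre is concentrated in degrees $\geq i$, with $\text{H}^i$ of the cofibre identified with a quotient of $\Omega$-type terms, which gives injectivity directly. If that is not available verbatim, the fallback is to reduce to rank-one valuation rings and then to the perfectoid base $V_0$ itself, by writing $V$ as a filtered colimit of henselizations of finitely generated extensions. In that situation the result follows from Bhatt--Mathew's computation of the syntomic cohomology of perfectoids together with the Gabber--Ramero almost purity for Cartier smoothness.
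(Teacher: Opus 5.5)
Your treatment of the first two assertions follows the paper. Proposition~\ref{propositionmotiviccohomologyvaluationringsmodpisindegreesatmosti} identifies the source with $\tau^{\leq i}\Z/p^k(i)^{\text{syn}}(V)$, which gives the factorisation. The isomorphism in degrees $\leq i-1$ and injectivity on $\text{H}^i$ are what \cite[Theorems~3.1 and~4.12]{bouis_cartier_2023} provide: Cartier smoothness of $V$ over the perfectoid base, followed by the Beilinson--Lichtenbaum comparison for such rings. Your extra reductions (Fujiwara--Gabber, and the dévissage to $k=1$ via the four lemma) are harmless but unnecessary.

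The gap is in the symbol statement. You must show that $\text{H}^i_{\text{mot}}(V,\Z/p^k(i)) \cong \text{H}^i(\Z/p^k(i)^{\text{syn}}(V))$ is generated by symbols $\{u_1,\dots,u_i\}$ with $u_j \in V^\times$. Knowing that the étale $\text{H}^i$ of $V[\tfrac{1}{p}]$ is generated by symbols from $V[\tfrac{1}{p}]^\times$ would not suffice. You only assert that the syntomic version ``should follow'' from \cite{bouis_cartier_2023}. However, the results of that paper used here give the comparison with étale cohomology, not a symbol description of $\text{H}^i$; the paper does not take symbol generation from there. A sheaf-level symbol description in the style of Bhatt--Mathew \cite{bhatt_syntomic_2023} (generation étale locally, for $F$-smooth rings) does not by itself give generation of the global group. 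This matters because $V$ is henselian but not strictly henselian, so there are Galois-cohomological contributions from lower cohomology sheaves.

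The missing ingredient, which the paper uses, lives on the motivic side. It is the isomorphism $\widehat{\text{K}}{}^{\text{M}}_i(V)/p^k \cong \text{H}^i_{\text{mot}}(V,\Z/p^k(i))$ of \cite[Theorem~2.21 and Corollary~2.10]{bouis_weibel_2025}. Improved Milnor $K$-theory of the local ring $V$ is generated by symbols of units of $V$. Your compatibility argument (multiplicativity plus Kummer theory in weight one) then identifies the image with the subgroup generated by étale symbols. Via Proposition~\ref{propositionmotiviccohomologyvaluationringsmodpisindegreesatmosti}, this is exactly the syntomic statement you wanted, so importing this input closes your argument.
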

	
	\begin{proof}
		The fact that the Beilinson--Lichtenbaum comparison map factors through the complex $$\tau^{\leq i} R\Gamma_{\text{ét}}(\text{Spec}(V[\tfrac{1}{p}]),\mu_{p^k}^{\otimes i}) \in \mathcal{D}(\Z/p^k)$$ is a consequence of Proposition~\ref{propositionmotiviccohomologyvaluationringsmodpisindegreesatmosti}. The isomorphism in degrees less than or equal to $i-1$ and the injectivity in degree $i$ of this map are then a consequence of \cite[Theorems~$3.1$ and~$4.12$]{bouis_cartier_2023}. The last statement is a consequence of the isomorphism
		$$\widehat{\text{K}}{}^{\text{M}}_i(V)/p^k \xlongrightarrow{\cong} \text{H}^i_{\text{mot}}(V,\Z/p^k(i))$$
		of abelian groups \cite[Theorem~$2.21$ and Corollary~$2.10$]{bouis_weibel_2025}.
	\end{proof}
	
	\begin{remark}
		The generation by symbols appearing in Theorem~\ref{theoremvaluationringsmotiviccohomologyfinitecoefficients} was also studied in the context of syntomic cohomology of general $p$-torsionfree $F$-smooth schemes by Bhatt--Mathew \cite{bhatt_syntomic_2023}. Note that all valuation rings are conjecturally $F$-smooth, and that the proof of Theorem~\ref{theoremvaluationringsmotiviccohomologyfinitecoefficients} adapts more generally to any henselian $F$-smooth valuation ring.
	\end{remark}

	\section{\texorpdfstring{$\C^{\star}$}{TEXT}-algebras}\label{SectionCstar}
	
	\vspace{-\parindent}
	\hspace{\parindent}
	
	By Gelfand representation theorem, the commutative $\C^\star$-algebras are exactly the algebras of continuous complex-valued functions $\mathscr{C}(X;\C)$ on a compact Hausdorff space $X$. An important theorem of Corti\~{n}as--Thom states that commutative $\C^\star$-algebras are $K$-regular \cite[Theorem~$1.5$]{cortinas_algebraic_2012}. This result was further generalised recently by Aoki to all smooth algebras over commutative $\C^\star$-algebras, and over a general local field \cite[Theorem~$8.7$]{aoki_k-theory_2026}. The following result is a motivic analogue of the latter result.
	
	\begin{theorem}[$\C^\star$-algebras are motivically regular, after \cite{cortinas_algebraic_2012,aoki_k-theory_2026}]
		Let $X$ be a compact Hausdorff space, $F$ be a characteristic zero local field, and $A$ be a smooth $\mathscr{C}(X;F)$\nobreakdash-algebra. Then for any integers $i \geq 0$ and $n \geq 0$, the natural map
		$$\Z(i)^{\emph{mot}}(A) \longrightarrow \Z(i)^{\emph{mot}}(A[T_1,\dots,T_n])$$
		is an equivalence in the derived category $\mathcal{D}(\Z)$.
	\end{theorem}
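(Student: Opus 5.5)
We will show that the homotopy fibre
$$\Z(i)^{\text{mot}}(A[T_1,\dots,T_n],(T_1,\dots,T_n)) := \text{fib}\big(\Z(i)^{\text{mot}}(A[T_1,\dots,T_n]) \rightarrow \Z(i)^{\text{mot}}(A)\big)$$
vanishes. By induction on $n$ (apply the case $n=1$ to the smooth $\mathscr{C}(X;F)$-algebra $A[T_1,\dots,T_{n-1}]$), it suffices to treat $n=1$. Write $N(i)(A)$ for the fibre in this case. Since $A$ is a $\Q$-algebra, the pullback square of \cite{bouis_motivic_2024} (or of \cite{elmanto_motivic_2023} in characteristic zero) expresses $\Z(i)^{\text{mot}}$ as the pullback of the cdh-local motivic cohomology $\Z(i)^{\text{cdh}}$ and the derived de Rham term $\widehat{\mathbb{L}\Omega}^{<i}_{-/\Q}[-1]$ over its cdh sheafification. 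The cdh part is $\mathbb{A}^1$-invariant, since it is built from classical motivic cohomology of smooth schemes. Hence
$$N(i)(A) \simeq \text{fib}\Big(N\widehat{\mathbb{L}\Omega}^{<i}_{-/\Q}(A) \rightarrow N R\Gamma_{\text{cdh}}\big(-,\widehat{\mathbb{L}\Omega}^{<i}_{-/\Q}\big)(A)\Big)[-1],$$
where $N$ denotes the fibre along $T\mapsto 0$. So the statement reduces to showing that $\mathbb{L}\Omega^{<i}_{-/\Q}$ is $\mathbb{A}^1$-invariant on $A$ "up to cdh sheafification". Equivalently, the fibre of $\mathbb{L}\Omega^{<i}_{-/\Q} \rightarrow R\Gamma_{\text{cdh}}(-,\mathbb{L}\Omega^{<i}_{-/\Q})$ should have vanishing $N$-term on $A$. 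This is the motivic shadow of $K$-regularity, in which $NK$ is controlled by $N\text{HC}$ and its cdh version via Cortiñas--Haesemeyer--Schlichting--Weibel.

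The key input I would use is from \cite{cortinas_algebraic_2012,aoki_k-theory_2026}. Their proofs of $K$-regularity go through the statement that $A$ behaves like a regular ring for cyclic homology relative to cdh descent. Concretely, they show that the fibre of $\text{HC}(-/\Q) \rightarrow L_{\text{cdh}}\text{HC}(-/\Q)$ on $A[T_1,\dots,T_n]$ is independent of $n$. Cortiñas--Thom use the fact that $\mathscr{C}(X;\C)$ is a filtered colimit of algebras that are regular-ish or "locally smooth" in a suitable sense. Aoki instead uses his analytic argument, giving $NK(A)=0$ for all $n$ and all degrees. Rationally, $NK$ splits via Adams operations into weight pieces. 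By \cite[Corollary~$4.60$]{bouis_motivic_2024} and the degeneration of the Atiyah--Hirzebruch spectral sequence rationally, $\Q(i)^{\text{mot}}$ fibres are detected by the weight-$i$ Adams eigenspace of $NK_\Q$. So $NK(A)=0$ gives $N\Q(i)^{\text{mot}}(A)=0$ for all $i$. Note that the weight decomposition is degreewise finite for the weight-graded pieces of $K_\Q$.

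It then remains to handle torsion, i.e.\ $N\Z/\ell(i)^{\text{mot}}(A)$ for every prime $\ell$. Since $A$ is a $\Q$-algebra, \cite[Theorem~$5.10$]{bouis_motivic_2024} (or Elmanto--Morrow) identifies $\Z/\ell(i)^{\text{mot}}$ with cdh-local data: in characteristic zero, finite-coefficient motivic cohomology is a cdh sheaf (for instance, mod $\ell$ $K$-theory of $\Q$-algebras satisfies cdh descent). Cdh sheafified classical motivic cohomology is $\mathbb{A}^1$-invariant, so $N\Z/\ell(i)^{\text{mot}}(A)=0$. Combining the rational and mod-$\ell$ vanishing, the integral fibre vanishes, because a complex in $\mathcal{D}(\Z)$ with zero rationalisation and zero mod $\ell$ reductions for all $\ell$ is zero.

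The main obstacle is the rational step: passing from $NK(A)\otimes\Q=0$ to the vanishing of each weight piece $N\Q(i)^{\text{mot}}(A)$. This requires knowing that the motivic filtration on $NK_\Q$ is split by the Adams decomposition with no infinite-weight or convergence issues. If that fails, I would instead argue directly with the derived de Rham description above. In that approach one proves that $N\big(\text{fib}(\mathbb{L}\Omega^{<i} \rightarrow L_{\text{cdh}}\mathbb{L}\Omega^{<i})\big)(A)=0$ by extracting, from the Cortiñas--Thom/Aoki argument, the corresponding statement for $N\text{HC}$ and its Hodge-graded pieces.
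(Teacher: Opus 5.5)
Your argument is correct and is essentially the paper's: both use Aoki's theorem that $K\simeq \mathrm{KH}$ on every $A[T_1,\dots,T_n]$ (equivalently, $K$-regularity of $A$), pass this to each weight via the natural rational Adams splitting of \cite[Corollary~4.60]{bouis_motivic_2024}, and rely on the $\mathbb{A}^1$-invariance of the cdh (equivalently $\mathbb{A}^1$-invariant) motivic complexes, whose discrepancy with $\Z(i)^{\mathrm{mot}}$ is rational for $\Q$-algebras. The difference is only in packaging: the paper identifies $\Z(i)^{\mathrm{mot}}\simeq\Z(i)^{\mathbb{A}^1}$ objectwise on each $A[T_1,\dots,T_n]$ via \cite[Remark~3.27]{bouis_motivic_2024} and concludes from a commutative square, whereas you work with $N$-terms and handle torsion separately; the convergence issue you flag as the main obstacle does not arise, since Corollary~4.60 supplies exactly the natural splitting you need.
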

	
	\begin{proof}
		By \cite[Theorem~$8.7\,(2)$]{aoki_k-theory_2026}, the natural map
		$$\text{K}(A[T_1,\dots,T_n]) \longrightarrow \text{KH}(A[T_1,\dots,T_n])$$
		is an equivalence of spectra for every integer $n \geq 0$. By \cite[Remark~$3.27$ and Corollary~$4.60$]{bouis_motivic_2024}, this implies that the vertical maps in the commutative diagram
		$$\begin{tikzcd}
			\Z(i)^{\text{mot}}(A) \ar[r] \ar[d] & \Z(i)^{\text{mot}}(A[T_1,\dots,T_n]) \ar[d] \\
			\Z(i)^{\mathbb{A}^1}(A) \ar[r] & \Z(i)^{\mathbb{A}^1}(A[T_1,\dots,T_n])
		\end{tikzcd}$$
		are equivalences in the derived category $\mathcal{D}(\Z)$. The bottom horizontal map is an equivalence in the derived category $\mathcal{D}(\Z)$ by definition of the presheaf $\Z(i)^{\mathbb{A}^1}$ \cite{bachmann_A^1-invariant_2025}, see also \cite[Section~$6$]{bouis_motivic_2024}. So the top horizontal map is an equivalence in the derived category $\mathcal{D}(\Z)$.
	\end{proof}

	\bibliographystyle{alpha}
	
	{\footnotesize
\bibliography{biblio.bib}

@unpublished{bouis_weibel_2025,
	author = {Bouis, Tess},
	note = {\url{https://arxiv.org/abs/2507.00000}},
	title = {{Weibel vanishing and the projective bundle formula for mixed characteristic motivic cohomology}},
	year = {2025}}

@article {bloch_algebraic_1986,
    AUTHOR = {Bloch, Spencer},
     TITLE = {Algebraic cycles and higher {$K$}-theory},
   JOURNAL = {Adv. in Math.},
  FJOURNAL = {Advances in Mathematics},
    VOLUME = {61},
      YEAR = {1986},
    NUMBER = {3},
     PAGES = {267--304},
      ISSN = {0001-8708},
}

@unpublished{bhatt_absolute_2022,
	author = {Bhatt, Bhargav and Lurie, Jacob},
	note = {\url{https://arxiv.org/abs/2201.06120}},
	title = {Absolute prismatic cohomology},
	year = {2022}}

@book {voevodsky_cycles_2000,
    AUTHOR = {Voevodsky, Vladimir and Suslin, Andrei and Friedlander, Eric
              M.},
     TITLE = {Cycles, transfers, and motivic homology theories},
    SERIES = {Annals of Mathematics Studies},
    VOLUME = {143},
 PUBLISHER = {Princeton University Press, Princeton, NJ},
      YEAR = {2000},
     PAGES = {vi+254}
}

@inproceedings {quillen_higher_1973,
    AUTHOR = {Quillen, Daniel},
     TITLE = {Higher algebraic {$K$}-theory. {I}},
 BOOKTITLE = {Algebraic {$K$}-theory, {I}: {H}igher {$K$}-theories ({P}roc.
              {C}onf., {B}attelle {M}emorial {I}nst., {S}eattle, {W}ash.,
              1972)},
    SERIES = {Lecture Notes in Math., Vol. 341},
     PAGES = {85--147},
 PUBLISHER = {Springer, Berlin-New York},
      YEAR = {1973}
}

@inproceedings {lichtenbaum_values_1973,
    AUTHOR = {Lichtenbaum, Stephen},
     TITLE = {Values of zeta-functions, \'{e}tale cohomology, and algebraic
              {$K$}-theory},
 BOOKTITLE = {Algebraic {$K$}-theory, {II}: ``{C}lassical'' algebraic
              {$K$}-theory and connections with arithmetic ({P}roc. {C}onf.,
              {B}attelle {M}emorial {I}nst., {S}eattle, {W}ash., 1972)},
    SERIES = {Lecture Notes in Math., Vol. 342},
     PAGES = {489--501},
 PUBLISHER = {Springer, Berlin-New York},
      YEAR = {1973}
}

@incollection {beilinson_notes_1986,
    AUTHOR = {Be{\u{\i}}linson, Alexander A.},
     TITLE = {Notes on absolute {H}odge cohomology},
 BOOKTITLE = {Applications of algebraic {$K$}-theory to algebraic geometry
              and number theory, {P}art {I}, {II} ({B}oulder, {C}olo.,
              1983)},
    SERIES = {Contemp. Math.},
    VOLUME = {55},
     PAGES = {35--68},
 PUBLISHER = {Amer. Math. Soc., Providence, RI},
      YEAR = {1986}
}

@article {bhatt_syntomic_2023,
    AUTHOR = {Bhatt, Bhargav and Mathew, Akhil},
     TITLE = {Syntomic complexes and {$p$}-adic \'{e}tale {T}ate twists},
   JOURNAL = {Forum Math. Pi},
  FJOURNAL = {Forum of Mathematics. Pi},
    VOLUME = {11},
      YEAR = {2023},
     PAGES = {Paper No. e1, 26},
   MRCLASS = {14F30 (14F42)},
  MRNUMBER = {4530091},
       DOI = {10.1017/fmp.2022.21},
       URL = {https://doi.org/10.1017/fmp.2022.21},
}

@article {bhatt_prisms_2022,
    AUTHOR = {Bhatt, Bhargav and Scholze, Peter},
     TITLE = {Prisms and prismatic cohomology},
   JOURNAL = {Ann. of Math. (2)},
  FJOURNAL = {Annals of Mathematics. Second Series},
    VOLUME = {196},
      YEAR = {2022},
    NUMBER = {3},
     PAGES = {1135--1275}
}

@article {luders_milnor_2023,
    AUTHOR = {L\"{u}ders, Morten and Morrow, Matthew},
     TITLE = {Milnor {$K$}-theory of {$p$}-adic rings},
   JOURNAL = {J. Reine Angew. Math.},
  FJOURNAL = {Journal f\"{u}r die Reine und Angewandte Mathematik. [Crelle's
              Journal]},
    VOLUME = {796},
      YEAR = {2023},
     PAGES = {69--116},
      ISSN = {0075-4102}
}

@unpublished{bachmann_A^1-invariant_2025,
	author = {Bachmann, Tom and Elmanto, Elden and Morrow, Matthew},
	note = {\url{https://arxiv.org/abs/2508.09915}},
	title = {{{$\mathbb{A}^1$}-invariant motivic cohomology of schemes}},
        year = {2025},
	}

@article {bouis_cartier_2023,
    AUTHOR = {Bouis, Tess},
     TITLE = {Cartier smoothness in prismatic cohomology},
   JOURNAL = {J. Reine Angew. Math.},
  FJOURNAL = {Journal f\"{u}r die Reine und Angewandte Mathematik. [Crelle's
              Journal]},
    VOLUME = {805},
      YEAR = {2023},
     PAGES = {241--282}
}

@article {spitzweck_commutative_2018,
    AUTHOR = {Spitzweck, Markus},
     TITLE = {A commutative {$\mathbb P^1$}-spectrum representing motivic
              cohomology over {D}edekind domains},
   JOURNAL = {M\'{e}m. Soc. Math. Fr. (N.S.)},
  FJOURNAL = {M\'{e}moires de la Soci\'{e}t\'{e} Math\'{e}matique de France. Nouvelle S\'{e}rie},
    volume = {157},
      YEAR = {2018},
     PAGES = {110}
}

@article {aoki_k-theory_2026,
    AUTHOR = {Aoki, Ko},
     TITLE = {{{$K$}-theory of rings of continuous functions}},
   JOURNAL = {Camb. J. Math.},
  FJOURNAL = {Cambridge Journal of Mathematics},
      YEAR = {2024},
      note = {To appear},
}

@article {mondal_ind-etale_2025,
    AUTHOR = {Mondal, Shubhodip and Mukhopadhyay, Alapan},
     TITLE = {Ind-\'{e}tale versus formally \'{e}tale},
   JOURNAL = {Bull. Lond. Math. Soc.},
  FJOURNAL = {Bulletin of the London Mathematical Society},
    VOLUME = {57},
      YEAR = {2025},
    NUMBER = {4},
     PAGES = {1195--1207},
      ISSN = {0024-6093},
}

@article {riggenbach_K-theory_2025,
    AUTHOR = {Riggenbach, Noah},
     TITLE = {{$K$}-theory of truncated polynomials},
   JOURNAL = {Math. Z.},
  FJOURNAL = {Mathematische Zeitschrift},
    VOLUME = {310},
      YEAR = {2025},
    NUMBER = {3},
     PAGES = {Paper No. 59, 46},
}

@article {staffeldt_rational_1985,
    AUTHOR = {Staffeldt, Ross E.},
     TITLE = {Rational algebraic {$K$}-theory of certain truncated
              polynomial rings},
   JOURNAL = {Proc. Amer. Math. Soc.},
  FJOURNAL = {Proceedings of the American Mathematical Society},
    VOLUME = {95},
      YEAR = {1985},
    NUMBER = {2},
     PAGES = {191--198}
}

@inproceedings {soule_rational_1980,
    AUTHOR = {Soul\'{e}, Christophe},
     TITLE = {Rational {$K$}-theory of the dual numbers of a ring of
              algebraic integers},
 BOOKTITLE = {Algebraic {$K$}-theory, {E}vanston 1980 ({P}roc. {C}onf.,
              {N}orthwestern {U}niv., {E}vanston, {I}ll., 1980)},
    SERIES = {Lecture Notes in Math.},
    VOLUME = {854},
     PAGES = {402--408},
 PUBLISHER = {Springer, Berlin},
      YEAR = {1981}
}

@article {hesselholt_K-theory_1997,
    AUTHOR = {Hesselholt, Lars and Madsen, Ib},
     TITLE = {On the {$K$}-theory of finite algebras over {W}itt vectors of
              perfect fields},
   JOURNAL = {Topology},
  FJOURNAL = {Topology. An International Journal of Mathematics},
    VOLUME = {36},
      YEAR = {1997},
    NUMBER = {1},
     PAGES = {29--101},
      ISSN = {0040-9383}
}

@article {hesselholt_cyclic_1997,
    AUTHOR = {Hesselholt, Lars and Madsen, Ib},
     TITLE = {Cyclic polytopes and the {$K$}-theory of truncated polynomial
              algebras},
   JOURNAL = {Invent. Math.},
  FJOURNAL = {Inventiones Mathematicae},
    VOLUME = {130},
      YEAR = {1997},
    NUMBER = {1},
     PAGES = {73--97},
      ISSN = {0020-9910}
}

@article {speirs_K-theory_2020,
    AUTHOR = {Speirs, Martin},
     TITLE = {On the {$K$}-theory of truncated polynomial algebras,
              revisited},
   JOURNAL = {Adv. Math.},
  FJOURNAL = {Advances in Mathematics},
    VOLUME = {366},
      YEAR = {2020},
     PAGES = {107083, 18}
}

@article {sulyma_floor_2023,
    AUTHOR = {Sulyma, Yuri J. F.},
     TITLE = {Floor, ceiling, slopes, and {$K$}-theory},
   JOURNAL = {Ann. K-Theory},
  FJOURNAL = {Annals of K-Theory},
    VOLUME = {8},
      YEAR = {2023},
    NUMBER = {3},
     PAGES = {331--354}
}

@article {angeltveit_K-theory_2009,
    AUTHOR = {Angeltveit, Vigleik and Gerhardt, Teena and Hesselholt, Lars},
     TITLE = {On the {$K$}-theory of truncated polynomial algebras over the
              integers},
   JOURNAL = {J. Topol.},
  FJOURNAL = {Journal of Topology},
    VOLUME = {2},
      YEAR = {2009},
    NUMBER = {2},
     PAGES = {277--294}
}

@unpublished{kelly_procdh_2024,
	author = {Kelly, Shane and Saito, Shuji},
	note = {\url{https://arxiv.org/abs/2401.02699}},
	title = {{A procdh topology}},
	year = {2024}}

@article {cortinas_algebraic_2012,
    AUTHOR = {Cortiñas, Guillermo and Thom, Andreas},
     TITLE = {Algebraic geometry of topological spaces~{I}},
   JOURNAL = {Acta Math.},
  FJOURNAL = {Acta Mathematica},
    VOLUME = {209},
      YEAR = {2012},
    NUMBER = {1},
     PAGES = {83--131},
      ISSN = {0001-5962},
       DOI = {10.1007/s11511-012-0082-6},
       URL = {https://doi.org/10.1007/s11511-012-0082-6},
}

@article{mathew_recent_2022,
	author = {Mathew, Akhil},
	journal = {Bull. Lond. Math. Soc.},
	number = {1},
	pages = {1--44},
	title = {Some recent advances in topological {H}ochschild homology},
	volume = {54},
	year = {2022}}

@article{kerz_towards_2021,
	author = {Kerz, Moritz and Strunk, Florian and Tamme, Georg},
	journal = {Compos. Math.},
	number = {6},
	pages = {1143--1171},
	title = {Towards {V}orst's conjecture in positive characteristic},
	volume = {157},
	year = {2021}}

@article {antieau_K-theory_2022,
    AUTHOR = {Antieau, Benjamin and Mathew, Akhil and Morrow, Matthew},
     TITLE = {The {$K$}-theory of perfectoid rings},
   JOURNAL = {Doc. Math.},
  FJOURNAL = {Documenta Mathematica},
    VOLUME = {27},
      YEAR = {2022},
     PAGES = {1923--1952}
}

@article {bhatt_remarks_2020,
    AUTHOR = {Bhatt, Bhargav and Clausen, Dustin and Mathew, Akhil},
     TITLE = {Remarks on {$K (1)$}-local {$K$}\nobreakdash-theory},
   JOURNAL = {Selecta Math. (N.S.)},
  FJOURNAL = {Selecta Mathematica. New Series},
    VOLUME = {26},
      YEAR = {2020},
    NUMBER = {3},
     PAGES = {Paper No. 39, 16},
      ISSN = {1022-1824},
MRREVIEWER = {Bj\o rn Ian Dundas},
       DOI = {10.1007/s00029-020-00566-6},
       URL = {https://doi.org/10.1007/s00029-020-00566-6},
}

@article {geisser_motivic_2004,
    AUTHOR = {Geisser, Thomas},
     TITLE = {Motivic cohomology over {D}edekind rings},
   JOURNAL = {Math. Z.},
  FJOURNAL = {Mathematische Zeitschrift},
    VOLUME = {248},
      YEAR = {2004},
    NUMBER = {4},
     PAGES = {773--794},
      ISSN = {0025-5874},
MRREVIEWER = {Vladimir I. Guletski\u{\i}},
       DOI = {10.1007/s00209-004-0680-x},
       URL = {https://doi.org/10.1007/s00209-004-0680-x},
}

@article{kelly_k-theory_2021,
	author = {Kelly, Shane and Morrow, Matthew},
	journal = {Compos. Math.},
	number = {6},
	pages = {1121--1142},
	title = {{$K$}-theory of valuation rings},
	volume = {157},
	year = {2021}}

@article{nikolaus_topological_2018,
    author = {Nikolaus, Thomas and Scholze, Peter},
    journal = {Acta Math.},
    number = {2},
    pages = {203--409},
    title = {On topological cyclic homology},
    volume = {221},
    year = {2018}
}

@incollection {lichtenbaum_values_1984,
    AUTHOR = {Lichtenbaum, Stephen},
     TITLE = {Values of zeta-functions at nonnegative integers},
 BOOKTITLE = {Number theory, {N}oordwijkerhout 1983 ({N}oordwijkerhout,
              1983)},
    SERIES = {Lecture Notes in Math.},
    VOLUME = {1068},
     PAGES = {127--138},
 PUBLISHER = {Springer, Berlin},
      YEAR = {1984}
}

@article {levine_techniques_2001,
    AUTHOR = {Levine, Marc},
     TITLE = {Techniques of localization in the theory of algebraic cycles},
   JOURNAL = {J.~Algebraic Geom.},
  FJOURNAL = {Journal of Algebraic Geometry},
    VOLUME = {10},
      YEAR = {2001},
    NUMBER = {2},
     PAGES = {299--363},
      ISSN = {1056-3911},
   MRCLASS = {14C25 (14F42 19E15)},
  MRNUMBER = {1811558},
MRREVIEWER = {Tam\'{a}s Szamuely},
}

@article{geisser_k-theory_2000,
	author = {Geisser, Thomas and Levine, Marc},
	journal = {Invent. Math.},
	number = {3},
	pages = {459--493},
	title = {The {$K$}-theory of fields in characteristic {$p$}},
	volume = {139},
	year = {2000}}

@article {bachmann_very_2025,
    AUTHOR = {Bachmann, Tom},
     TITLE = {The very effective covers of {KO} and {KGL} over {D}edekind
              schemes},
   JOURNAL = {J. Eur. Math. Soc. (JEMS)},
  FJOURNAL = {Journal of the European Mathematical Society (JEMS)},
    VOLUME = {27},
      YEAR = {2025},
    NUMBER = {11},
     PAGES = {4705--4712},
      ISSN = {1435-9855,1435-9863},
}

@article{bhatt_topological_2019,
	author = {Bhatt, Bhargav and Morrow, Matthew and Scholze, Peter},
	journal = {Publ. Math. Inst. Hautes \'{E}tudes Sci.},
	pages = {199--310},
	title = {Topological {H}ochschild homology and integral {$p$}-adic {H}odge theory},
	volume = {129},
	year = {2019}}

@incollection {beilinson_height_1987,
    AUTHOR = {Be{\u{\i}}linson, Alexander A.},
     TITLE = {Height pairing between algebraic cycles},
 BOOKTITLE = {Current trends in arithmetical algebraic geometry ({A}rcata,
              {C}alif., 1985)},
    SERIES = {Contemp. Math.},
    VOLUME = {67},
     PAGES = {1--24},
 PUBLISHER = {Amer. Math. Soc., Providence, RI},
      YEAR = {1987}
}

@article {beilinson_notes_1987,
    AUTHOR = {Be{\u{\i}}linson, Alexander and MacPherson, Robert and Schechtman, Vadim},
     TITLE = {Notes on motivic cohomology},
   JOURNAL = {Duke Math. J.},
  FJOURNAL = {Duke Mathematical Journal},
    VOLUME = {54},
      YEAR = {1987},
    NUMBER = {2},
     PAGES = {679--710}
}

@article {antieau_beilinson_2020,
    AUTHOR = {Antieau, Benjamin and Mathew, Akhil and Morrow, Matthew and
              Nikolaus, Thomas},
     TITLE = {On the {B}eilinson fiber square},
   JOURNAL = {Duke Math. J.},
  FJOURNAL = {Duke Mathematical Journal},
    VOLUME = {171},
      YEAR = {2022},
    NUMBER = {18},
     PAGES = {3707--3806},
      ISSN = {0012-7094},
}

@article {kratzer_lambda_1980,
    AUTHOR = {Kratzer, Charles},
     TITLE = {{$\lambda $}-structure en {$K$}-th\'{e}orie alg\'{e}brique},
   JOURNAL = {Comment. Math. Helv.},
  FJOURNAL = {Commentarii Mathematici Helvetici},
    VOLUME = {55},
      YEAR = {1980},
    NUMBER = {2},
     PAGES = {233--254},
      ISSN = {0010-2571}
}

@unpublished{elmanto_motivic_2023,
	author = {Elmanto, Elden and Morrow, Matthew},
	note = {\url{https://arxiv.org/abs/2309.08463}},
	title = {{Motivic cohomology of equicharacteristic schemes}},
	year = {2023}}

@unpublished{coulembier_K-theory_2023,
	author = {Coulembier, Kevin},
	note = {\url{https://arxiv.org/abs/2304.01421}},
	title = {{{$K$}-theory and perfection}},
	year = {2023}}

@article {elmanto_perfection_2020,
    AUTHOR = {Elmanto, Elden and Khan, Adeel A.},
     TITLE = {Perfection in motivic homotopy theory},
   JOURNAL = {Proc. Lond. Math. Soc. (3)},
  FJOURNAL = {Proceedings of the London Mathematical Society. Third Series},
    VOLUME = {120},
      YEAR = {2020},
    NUMBER = {1},
     PAGES = {28--38}
}

@unpublished{antieau_K-theory_2024,
	author = {Antieau, Benjamin and Krause, Achim and Nikolaus, Thomas},
	note = {\url{https://arxiv.org/abs/2405.04329}},
	title = {{On the {$K$}-theory of {$\Z/p^n$}}},
	year = {2024}}

@unpublished{bouis_beilinson-lichtenbaum_2025,
	author = {Bouis, Tess and Kundu, Arnab},
	note = {\url{https://arxiv.org/abs/2506.09910}},
	title = {{Beilinson--Lichtenbaum phenomenon for motivic cohomology}},
	year = {2025}}

@unpublished{bouis_motivic_2024,
	author = {Bouis, Tess},
	note = {\url{https://arxiv.org/abs/2412.06635}},
	title = {{Motivic cohomology of mixed characteristic schemes}},
	year = {2024}}

@article {riggenbach_K-theory_2023,
    AUTHOR = {Riggenbach, Noah},
     TITLE = {{$K$}-theory of cuspidal curves over a perfectoid base and
              formal analogues},
   JOURNAL = {Adv. Math.},
  FJOURNAL = {Advances in Mathematics},
    VOLUME = {433},
      YEAR = {2023},
     PAGES = {Paper No. 109289, 40},
      ISSN = {0001-8708}
}

@incollection {hesselholt_algebraic_2020,
    AUTHOR = {Hesselholt, Lars and Nikolaus, Thomas},
 BOOKTITLE = {{$K$}-theory in algebra, analysis and topology},
    SERIES = {Contemp. Math.},
    VOLUME = {749},
TITLE = {Algebraic {$K$}-theory of planar cuspidal curves},
     PAGES = {139--148},
 PUBLISHER = {Amer. Math. Soc., Providence, RI},
      YEAR = {2020}
}
}

\end{document}